\newtheorem{theorem}{Theorem}[section]
\newtheorem{proposition}[theorem]{Proposition}
\newtheorem{lemma}[theorem]{Lemma}
\begin{document}

\title{Paths with many shortcuts in tournaments \thanks{This research was supported by the Israel Science Foundation (grant No. 1082/16).}}

\author{
Raphael Yuster
\thanks{Department of Mathematics, University of Haifa, Haifa
31905, Israel. Email: raphy@math.haifa.ac.il}
}

\date{}

\maketitle

\setcounter{page}{1}

\begin{abstract}

A {\em shortcut} of a directed path $v_1 v_2 \cdots v_n$ is an edge $v_iv_j$ with $j > i+1$.
If $j = i+2$ the shortcut is a {\em hop}. If all hops are present, the path is called {\em hop complete} so the path and its hops form a {\em square} of a path.
We prove that every tournament with $n \ge 4$ vertices has a Hamiltonian path with at least $(4n-10)/7$ hops,
and has a hop complete path of order at least $n^{0.295}$.

A spanning binary tree of a tournament is a {\em spanning shortcut tree} if for every vertex of the tree, all its left descendants are in-neighbors and all its right descendants are out-neighbors.
It is well-known that every tournament contains a spanning shortcut tree.
The number of shortcuts of a shortcut tree is the number of shortcuts of its unique induced Hamiltonian path.
Let $t(n)$ denote the largest integer such that every tournament with $n$ vertices has a spanning shortcut tree with at least $t(n)$ shortcuts. We almost determine the asymptotic growth of $t(n)$ as it is proved that
$\Theta(n\log^2n) \ge t(n)-\frac{1}{2}\binom{n}{2} \ge \Theta(n \log n)$.

\vspace*{3mm}
\noindent
{\bf AMS subject classifications:} 05C20, 05C35, 05C38\\
{\bf Keywords:} tournament; Hamiltonian path; shortcut

\end{abstract}

\section{Introduction}

All graphs in this paper are finite and simple. An {\em orientation} of an undirected graph is obtained by assigning a direction to each
edge. An important class of oriented graphs are {\em tournaments} which are
orientations of a complete graph. A classical result of R\'edei \cite{redei-1934} states that every tournament has an odd number of Hamiltonian paths, and in particular at least one such path. In this paper we establish the existence of Hamiltonian paths or other long paths in tournaments that are ``rich'' in the precise sense
that follows. For a large collection of results on Hamilton paths of given type in tournaments we refer to
the recent survey \cite{BH-2018}.

A {\em shortcut} of a directed path $v_1 v_2 \cdots v_n$ is an edge $v_iv_j$ with $j > i+1$.
If $j = i+2$ the shortcut is a {\em hop}. If all hops are present, the path is called {\em hop complete} so the path and its hops form a {\em square} of a path.
More generally, the $k$th power of a directed path as above contains all edges $v_iv_j$ where
$i+1 \le j \le i+k$. One can naturally define the following three parameters with respect to $n$-vertex tournaments.
Let $h(n)$ denote the largest integer such that every
tournament with $n$ vertices has a Hamiltonian path with at least $h(n)$ hops.
Let $s(n)$ denote the largest integer such that every
tournament with $n$ vertices has a Hamiltonian path with at least $s(n)$ shortcuts.
Let $c(n)$ denote the largest integer such that every
tournament with $n$ vertices has a hop-complete path on $c(n)$ vertices.

Our first result concerns $h(n)$. It is not difficult to construct tournaments for which every Hamiltonian path has at most $\lceil (2n-6)/3 \rceil$ hops (see Section 2).
While it is plausible to suspect that the construction is extremal for all $n$, we can
obtain a relatively close lower bound.
\begin{theorem}\label{t:1}
	For all $n \ge 4$ it holds that $\lceil (2n-6)/3 \rceil \ge h(n) \ge (4n-10)/7$.
\end{theorem}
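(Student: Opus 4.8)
The upper bound $\lceil (2n-6)/3\rceil \ge h(n)$ is witnessed by the construction promised in Section 2 — a transitive tournament each of whose vertices is blown up into a directed triangle, in which every Hamiltonian path is forced to traverse each triangle in turn and hence contains a back edge inside it — so the real content is the lower bound $h(n)\ge (4n-10)/7$, which I would prove by induction on $n$. First I would reduce to the strongly connected case. If $T$ is not strong, write its condensation as $C_1\to C_2\to\cdots\to C_m$ with $m\ge 2$, split $T$ into $A=C_1$ and $B=V(T)\setminus C_1$, apply induction (or the trivial small cases) to the sub-tournaments on $A$ and on $B$ to get Hamiltonian paths with the claimed number of hops, and concatenate them. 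Since every edge goes from $A$ to $B$, the last vertex of $P_A$ followed by the first of $P_B$ creates two new triples whose distinguishing chords both point forward, so the concatenation gains $2$ hops at the junction (one if a side is a single vertex). A short computation shows this gain more than compensates the $-10$ slack in the two recursive bounds, even when $C_1$ is just a directed triangle (zero internal hops, only three vertices consumed). Hence we may assume $T$ is strongly connected, and that $n$ is at least some absolute constant $n_0$, the cases $4\le n<n_0$ being checked directly.

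For strongly connected $T$, Camion's theorem provides a Hamiltonian cycle $C=v_1v_2\cdots v_nv_1$. For $i\in\mathbb Z_n$ call the triple $\{v_i,v_{i+1},v_{i+2}\}$ \emph{transitive} if $v_i\to v_{i+2}$ and a \emph{directed triangle} otherwise (recall $v_i\to v_{i+1}\to v_{i+2}$ always holds on $C$). Deleting the cycle edge $v_jv_{j+1}$ yields a Hamiltonian path whose hops are exactly the transitive cyclic triples other than those at positions $j-1$ and $j$; so this path has $h(C)-x_{j-1}-x_j$ hops, where $h(C)$ is the number of transitive cyclic triples and $x_i\in\{0,1\}$ indicates transitivity of triple $i$. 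If some triple is a directed triangle, taking $j$ to be its index forces $x_j=0$, so the path has at least $h(C)-1=n-b(C)-1$ hops, where $b(C)=n-h(C)$ is the number of directed triangles of $C$; if $b(C)=0$ the path has $n-2$ hops. In either case it therefore suffices to produce a Hamiltonian cycle with $b(C)\le (3n+3)/7$, since $n-b(C)-1\ge (4n-10)/7$ precisely when $b(C)\le(3n+3)/7$, and $n-2\ge(4n-10)/7$ for all $n\ge 2$.

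What remains is the structural lemma: every strongly connected tournament has a Hamiltonian cycle with at most $(3n+3)/7$ directed-triangle triples. I would choose $C$ among all Hamiltonian cycles of $T$ to minimize $b(C)$ and argue that then $b(C)\le (3n+3)/7$. If not, the indices of the directed triangles have density exceeding $3/7$ around $\mathbb Z_n$, so some window of $7$ consecutive triples contains at least $4$ directed triangles; the $O(1)$ vertices spanned by this window, together with their two cycle-neighbours just outside it, induce a bounded configuration, and the plan is to show that any such configuration can be rerouted — replacing the ordering of those vertices inside the cycle by a different ordering, possibly borrowing one nearby vertex — so as to create strictly fewer consecutive directed triangles while remaining a Hamiltonian cycle of $T$ (this is automatic if the first and last vertices of the window are kept fixed, and the transitive triples surrounding the window give enough slack to modify those as well). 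The heart of the argument, and the step I expect to be by far the most delicate, is exactly this local case analysis — deciding which short patterns of directed-triangle/transitive triples are improvable (for instance, that a run of directed triangles not followed by sufficiently many transitive triples must be improvable) — and it is this analysis that pins the constant at $4/7$ rather than at the conjectured $2/3$. Once the lemma is established, combine it with the preceding paragraph and feed the resulting path back into the induction, with a final bit of bookkeeping to reconcile the floors and ceilings and to dispatch the finitely many base cases $4\le n<n_0$.
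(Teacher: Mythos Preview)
Your approach diverges sharply from the paper's. The paper never invokes Camion's theorem or Hamiltonian cycles; instead it picks a vertex $v$ with out-degree in $[\frac{n}{4}-\frac12,\frac{3n}{4}-\frac12]$ (every tournament on $n\ge 7$ vertices has one), recurses on $G[N^+(v)]$ and $G[N^-(v)]$, and concatenates the two resulting Hamiltonian paths through $v$, gaining two hops at the join. This yields $h(n)\ge h(x)+h(n-1-x)+2$ for some $x$ in the balanced range, and once $h(n)$ is computed exactly for $4\le n\le 10$ by a direct case analysis (and $11\le n\le 14$ are checked via the same recursion), the inequality $h(n)\ge (4n-10)/7$ follows by a one-line induction. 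The constant $4/7$ is thus pinned down by the small-case values together with the loss of one vertex per recursive step; no extremal or rerouting argument is involved.

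Your route has a genuine hole at its centre. The structural lemma --- that every strongly connected tournament admits a Hamiltonian cycle with $b(C)\le (3n+3)/7$ --- is asserted, not proved. You propose to take $C$ minimizing $b(C)$, locate by averaging a window of seven consecutive triples containing at least four directed triangles, and ``reroute'' locally to decrease $b(C)$. But no rerouting is exhibited, and the obstacles are substantial: within the nine vertices spanned by such a window the only edges you control are the eight cycle edges and the back-edges $v_{i+2}v_i$ from the directed triangles; every other pair among them is unconstrained, so the ``bounded configuration'' already branches into thousands of sub-cases before you even try to rearrange. Even granting an improvement inside the window, you must keep the cycle intact outside it, and your claim that ``the transitive triples surrounding the window give enough slack'' presupposes that such triples exist adjacent to the window --- something the averaging argument does not supply. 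Finally, the window length $7$ and threshold $4$ are reverse-engineered from the target bound rather than produced by the analysis; nothing in your sketch explains why this density, and not some other, is the one at which local improvement becomes forced. As written, the key step is a programme rather than a proof, whereas the paper's divide-and-conquer argument is short and complete.
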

Our proof technique can be used to slightly improve the lower bound ratio $4/7$ at the expense of introducing considerably more technical details, but it cannot reach the $2/3$ upper bound ratio.

We next turn to consider the largest hop-complete path in a tournament.
It is relatively simple to prove that a tournament all of whose vertices have out-degree roughly $n/2$ has
a hop-complete Hamiltonian path. In fact, Bollob\'as and H\"aggkvist \cite{BH-1990} proved a significantly stronger result: for every $\epsilon > 0$, if $n$ is sufficiently large, then every
tournament with $n$ vertices having the property that the out-degree and in-degree of each vertex is at least
$(\frac{1}{4}+\epsilon)n$ has a $k$th power of a Hamiltonian {\em cycle}. The constant $1/4$ cannot be decreased as for any smaller constant it is easy to construct tournaments
satisfying the resulting requirement for the out-degree and in-degree and that are not strongly connected. One may then ask whether long hop-complete paths can still be guaranteed in {\em every} tournament, or, more formally, to determine $c(n)$.
It is very easy to obtain a logarithmic lower bound for $c(n)$ just by using the fact that an $n$-vertex tournament has a transitive sub-tournament of logarithmic order. Here we prove that $c(n)$ is
at least polynomial in $n$.
\begin{theorem}\label{t:2}
	$c(n) \ge n^{0.295}$.
\end{theorem}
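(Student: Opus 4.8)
The plan is to establish $c(n)\ge n^{0.295}$ by strong induction on $n$. For $n$ below a suitable absolute constant $N_0$ this already follows from the logarithmic bound: an $n$-vertex tournament has a transitive subtournament on $\lfloor\log_2 n\rfloor+1$ vertices, whose vertices read in transitive order form a hop-complete path, and $\lfloor\log_2 n\rfloor+1\ge n^{0.295}$ for such $n$. The mechanism driving the induction is an elementary gluing fact. Let $R=r_1\cdots r_p$ and $Q=q_1\cdots q_m$ be hop-complete paths on disjoint vertex sets and let $v$ be a further vertex. (i)~If every vertex of $R$ beats $v$, $v$ beats every vertex of $Q$, and $r_p\to q_1$, then $r_1\cdots r_p\,v\,q_1\cdots q_m$ is hop-complete, because the only edges of the prospective square that are not already forced are the hops $r_{p-1}v$, $r_pq_1$, $vq_2$, all of which hold. (ii)~(Mirror.) If $p,m\ge2$, every vertex of $R$ beats $v$, and both $q_{m-1}$ and $q_m$ beat every vertex of $R$, then $q_1\cdots q_m\,r_1\cdots r_p\,v$ is hop-complete.

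For the inductive step, first suppose $T$ is not strongly connected, and let $V_1,\dots,V_r$ ($r\ge2$) be its strongly connected components, ordered so that $V_i\to V_j$ whenever $i<j$. By induction each $V_i$ contains a hop-complete path $P_i$ on at least $|V_i|^{0.295}$ vertices, and $P_1P_2\cdots P_r$ is hop-complete in $T$, since every one of its hops either lies within a single block or joins some $V_i$ to some $V_j$ with $i<j$ and is therefore an edge of $T$. Its length is at least $\sum_i|V_i|^{0.295}\ge\bigl(\sum_i|V_i|\bigr)^{0.295}=n^{0.295}$, using the subadditivity of $x\mapsto x^{0.295}$ on the nonnegative reals.

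Now suppose $T$ is strongly connected. For any nonempty proper $S\subseteq V(T)$ one has $\sum_{v\in S}d^+(v)=\binom{|S|}{2}+(\text{number of edges from }S\text{ to }V\setminus S)\ge\binom{|S|}{2}+1$; applying this to the $\lceil n/2\rceil$ vertices of smallest out-degree, and then the same to the reverse tournament, yields a vertex $v$ with $d^+(v),d^-(v)\ge n/4-O(1)$. Put $A=N^-(v)$, $B=N^+(v)$, so $|A|,|B|\ge n/4-O(1)$. Every $b\in B$ satisfies $|N^+(b)\cap A|\ge 2|A|/3$ or $|N^-(b)\cap A|>|A|/3$, so at least one of $B'':=\{b\in B:|N^+(b)\cap A|\ge 2|A|/3\}$ and $B':=B\setminus B''$ has size $\ge|B|/2$. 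If it is $B''$: induction inside $B''$ gives a hop-complete path $Q=q_1\cdots q_m$; then $A_2:=N^+(q_{m-1})\cap N^+(q_m)\cap A$ has $|A_2|\ge 2|A|/3+2|A|/3-|A|=|A|/3$; induction inside $A_2$ gives a hop-complete path $R$; and $Q\,R\,v$ is hop-complete by (ii) (every vertex of $R\subseteq A$ beats $v$, and $q_{m-1},q_m\in B''$ each beat every vertex of $R\subseteq A_2$). If it is $B'$: induction inside $B'$ gives $Q=q_1\cdots q_m$; then $A_1:=N^-(q_1)\cap A$ has $|A_1|>|A|/3$ (since $q_1\in B'$); induction inside $A_1$ gives $R$; and $R\,v\,Q$ is hop-complete by (i) (every vertex of $R\subseteq A_1$ beats $v$ and beats $q_1$, while $v$ beats all of $Q\subseteq B$). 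In either case the path produced has length at least $c(|B^{*}|)+c(|A^{*}|)+1\ge(n/8)^{0.295}+(n/12)^{0.295}+1$ up to a lower-order error, and since $8^{-0.295}+12^{-0.295}>1$ this is at least $n^{0.295}$ once $n\ge N_0$. This closes the induction.

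The hard part is keeping the recursion from degrading to a merely polylogarithmic bound: the two hops straddling the two junctions force us, in the strongly connected case, to recurse not on $A$ and $B$ themselves but on the strictly smaller sets $B'$ or $B''$ (a factor-$2$ loss, coming from the in/out dichotomy on $B$) and $A_1$ or $A_2$ (a factor-$3$ loss, coming from needing the last two vertices of the path found in $B$ to share a common neighbourhood inside $A$), so the two recursive instances have size only about $n/8$ and $n/12$; the scheme survives solely because $8^{-0.295}+12^{-0.295}$ is just barely larger than $1$. Choosing the pivot $v$ unbalanced — with $d^+(v)\ge\lambda n$ and $d^-(v)\ge(\tfrac12-\lambda)n$ — and optimising $\lambda$ pushes the exponent up a little, toward roughly $\log_{10}2\approx0.30$; but a constant multiplicative loss at every level of the recursion is intrinsic to this approach, so it cannot reach an exponent close to $1$.
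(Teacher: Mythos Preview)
Your argument is correct but follows a genuinely different route from the paper. The paper's proof rests on a nontrivial lemma about \emph{pairs} of vertices: in any $n$-vertex tournament there exist $u,v$ with $\min\{d^+(u,v),d^-(u,v)\}\ge n\frac{3-\sqrt 5}{8}-4$. One then recurses symmetrically on $N^-(u,v)$ and $N^+(u,v)$ and glues the two resulting hop-complete paths with $u,v$ in the middle; the exponent $0.295$ arises because $\ln 2/\ln\!\bigl(8/(3-\sqrt5)\bigr)>0.295$. Your approach instead pivots on a \emph{single} balanced vertex $v$ and runs an asymmetric two-stage recursion: first inside the half of $N^+(v)$ whose elements have a prescribed neighbourhood type in $A=N^-(v)$, and then inside the subset of $A$ determined by one or two endpoints of the path just found. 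This sidesteps the paper's $\Delta_2$ lemma entirely, at the cost of the less clean recurrence $c(n)\gtrsim c(n/8)+c(n/12)$; since $8^{-0.295}+12^{-0.295}>1$ the exponent still clears $0.295$ (indeed your critical exponent, the root of $8^{-\alpha}+12^{-\alpha}=1$, is about $0.305$, marginally better than the paper's $\approx 0.2953$). Your separate treatment of the non-strongly-connected case via component concatenation is a pleasant shortcut but not strictly needed, since the balanced-vertex fact holds without strong connectivity. Two minor points worth tightening: your sketch for the existence of $v$ with $d^+(v),d^-(v)\ge n/4-O(1)$ is a bit loose (though the fact itself is standard and easy), and the gluing in case~(ii) needs $p\ge 2$ and $m\ge 2$, which you should note follow once $n$ is past the base-case threshold.
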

While we cannot rule out that $c(n)$ is linear in $n$, we can show that there are tournaments
of order $n$ where the longest $k$th power of a path is of order about $nk/2^{k/2}$ (see Proposition \ref{p:1}).

The value of $s(n)$ easily relates to another well-studied graph parameter.
Let $\beta(n)$ be the largest integer such that every tournament with $n$ vertices has
an acyclic subgraph with at least $\beta(n)$ edges.
While it is straightforward that $\beta(n) \ge  \frac{1}{2}\binom{n}{2}$, determining its growth beyond this
lower bound is not trivial. Spencer \cite{spencer-1971}, improving earlier results of
Erd\H{o}s and Moon \cite{EM-1965}, proved that $\beta(n) \ge \frac{1}{2}\binom{n}{2} + \Omega(n^{3/2})$.
A probabilistic construction of Spencer \cite{spencer-1980}, later simplified
with an improved constant by de la Vega \cite{delavega-1983} shows that
$\beta(n) \le \frac{1}{2}\binom{n}{2} + O(n^{3/2})$, hence the growth rate of $\beta(n)$ above the
trivial threshold is $\Theta(n^{3/2})$.
It is a folklore argument to show that every maximal acyclic subgraph of a tournament has a Hamiltonian path, hence it immediately follows that $s(n)=\beta(n)-n+1$
and that $s(n) = \frac{1}{2}\binom{n}{2} + \Theta(n^{3/2})$.
It is not difficult to prove that there is also an $O(n^2)$ time algorithm that produces
a maximal acyclic subgraph of a tournament with $\frac{1}{2}\binom{n}{2} + \Omega(n^{3/2})$ edges.

There are two standard, equally simple proofs that every tournament has a Hamiltonian path.
The first is the greedy construction which extends every non-Hamiltonian path by adding a non-path vertex to it.
The second is a recursive construction: Take any vertex $v$ of a tournament, construct (recursively) two Hamiltonian paths on the sub-tournaments induced by the in-neighbors and out-neighbors of $v$ respectively. Now concatenate these paths together with $v$ in the middle to a Hamiltonian path of the entire tournament. This latter Hamiltonian path has an interesting tree-like structure which we can formally define as follows.

Recall that a binary tree is a rooted tree where every vertex has at most two children,
a left child (if exists) and a right child (if exists). A descendant of a rooted tree vertex
is any other vertex that appears in the subtree rooted at that vertex.
In a binary tree, left (resp. right) descendants are all vertices that appear in the subtree rooted at that left
(resp. right) child.
A spanning binary tree of a tournament is a {\em spanning shortcut tree} if for every vertex of the tree, all its left descendants are in-neighbors and all its right descendants are out-neighbors, as shown in Figure \ref{f:shortcut-tree}.
Observe that every spanning shortcut tree is associated with a unique Hamiltonian path formed by the in-order traversal of
the tree (the in-order is the unique order of the vertices where for each vertex, all its left descendants appear before it in the order, and all its right descendants appear after it in the order). The number of shortcuts of the spanning shortcut tree is defined as the number of shortcuts of its Hamiltonian path. Note that the recursive construction of a Hamiltonian path described in the previous paragraph actually
constructs a spanning shortcut tree of the given tournament.
In fact, it is easy to see that the number of shortcuts of a spanning shortcut tree is
$\Omega(n \log n)$ and Bar-Noy and Naor \cite{BN-1990} designed an $O(n \log n)$ time algorithm
that produces a spanning shortcut tree and in particular, a Hamiltonian path in that running time.

\begin{figure}
	\includegraphics[scale=0.6,trim=-190 310 600 45, clip]{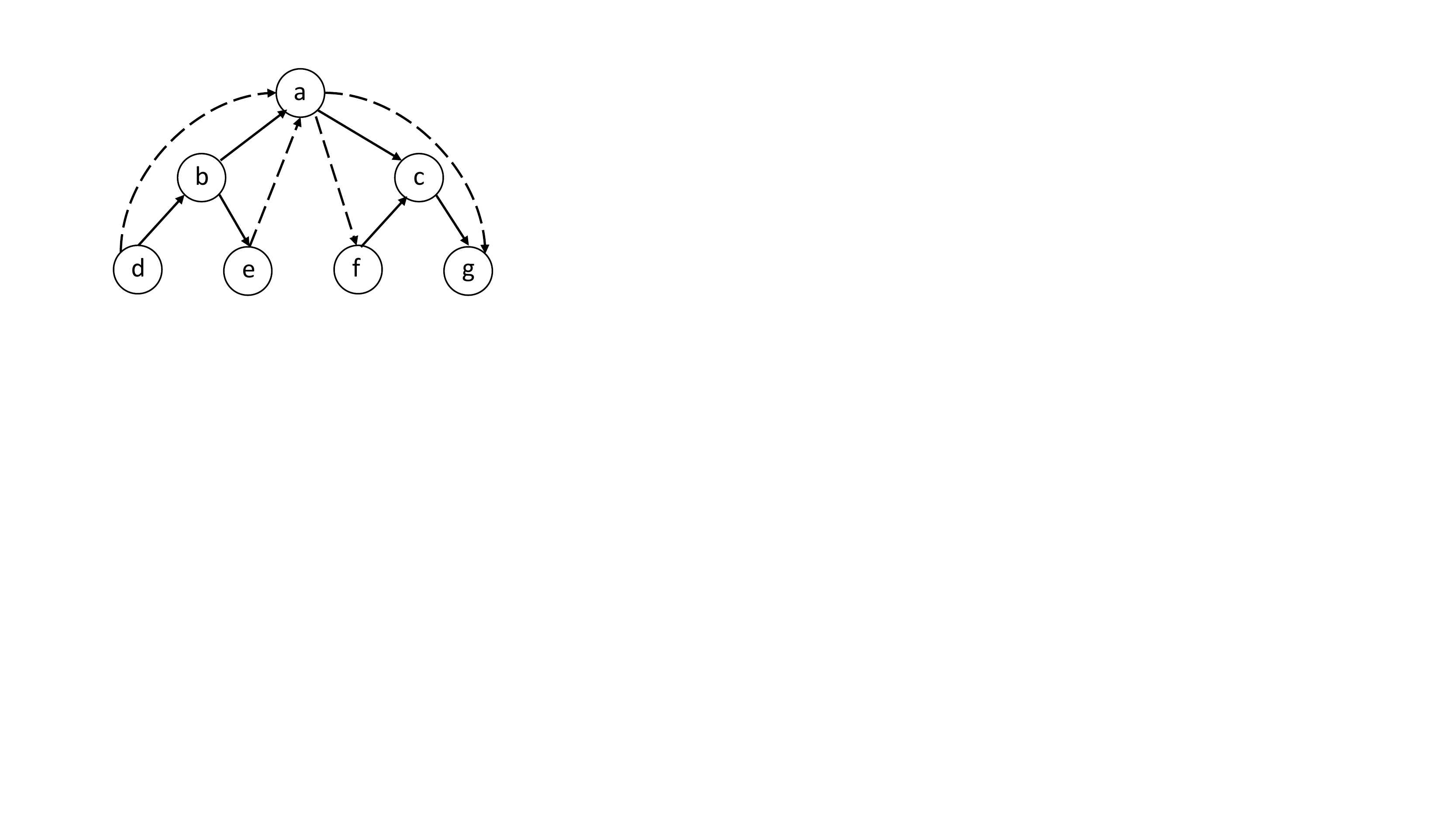}
	\caption{A shortcut tree of some tournament on vertex set $\{a,b,c,d,e,f,g\}$. The solid arrows are tree edges (edges connecting a vertex with its child)
	and the dashed arrows represent edges connecting a vertex to its non-child descendant. For every vertex $v$, edges go from $v$ to its right descendants, and go into $v$ from its left descendants. The in-order of the tree is
	$d,b,e,a,f,c,g$ and it is a Hamiltonian path. The dashed arrows are shortcuts but the tree may have other shortcuts; for example if $df$ is an edge then it is a shortcut, while if $fd$ is an edge then it is not a shortcut.}
	\label{f:shortcut-tree}
\end{figure}

Notice that not every Hamiltonian path of a tournament is a Hamiltonian path of a spanning shortcut
tree. Furthermore, it is easy to construct tournaments and a Hamiltonian path in them that does correspond to a spanning shortcut tree but only has $\Theta(n \log n)$ shortcuts.
As we are interested in  ``rich'' Hamiltonian paths,
let $t(n)$ be the largest integer such that every tournament with $n$ vertices has a spanning shortcut tree with at least $t(n)$ shortcuts.
Our next result proves that the growth rate of $t(n)$ above the $50$ percent range is $\Omega(n \log n)$
(Note: it is not entirely obvious why even $50$ percent is always attainable). We also get quite close to this lower bound showing that $t(n)$ is not larger than $O(n \log^{2} n)$ above the $50$ percent range.
\begin{theorem}\label{t:3}
For all $n \ge 1$ it holds that $t(n) \ge \frac{1}{2}\binom{n}{2} +(n\ln n)/5 - 3122$.
Furthermore, there is an $O(n^2)$ time algorithm that finds a spanning shortcut
tree with at least $\frac{1}{2}\binom{n}{2} +(n\ln n)/5 - 3122$ shortcuts.
On the other hand, for all $n$ sufficiently large, $t(n) \le \frac{1}{2}\binom{n}{2} +4n\log^{2} n$.
\end{theorem}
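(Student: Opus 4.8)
My plan is to reduce both bounds to one identity. Fix a spanning shortcut tree $T$ of an $n$-vertex tournament. For a node $v$ let $I_v,O_v$ be the vertex sets of its left and right subtrees — so every vertex of $I_v$ beats $v$ and $v$ beats every vertex of $O_v$ — let $W_v=I_v\cup\{v\}\cup O_v$ be the subtournament at $v$ (with $m_v=|W_v|$), and put $d_T(v)=e(I_v,O_v)-e(O_v,I_v)$, the number of forward minus backward arcs running from the left subtree to the right subtree. Writing $D_T=\sum_v\operatorname{depth}(v)$ and $\Sigma_T=\sum_v d_T(v)$, the first step is to prove
\[
(\text{number of shortcuts of }T)\;=\;\tfrac12\binom n2-(n-1)+\tfrac12\bigl(D_T+\Sigma_T\bigr).
\]
Each of the $\binom n2$ pairs of path vertices is consecutive ($n-1$ of these, none a shortcut), a non-consecutive ancestor–descendant pair (always a shortcut; there are $D_T-(n-1)$), or a pair incomparable in $T$; an incomparable pair with least common ancestor $v$ lies across $I_v$ and $O_v$ and is a shortcut exactly when its arc runs from the $I_v$-endpoint to the $O_v$-endpoint, so the number of shortcuts among incomparable pairs is $\sum_v e(I_v,O_v)=\tfrac12\bigl((\binom n2-D_T)+\Sigma_T\bigr)$, using $\sum_v|I_v||O_v|=\binom n2-D_T$. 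Adding these up gives the formula, and also $D_T=\sum_v(m_v-1)$. Now everything is about how large $D_T+\Sigma_T$ must be, and how small it can be kept.

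\textbf{Lower bound and algorithm.} I would build $T$ greedily: in the current subtournament $W$ choose as root a vertex $v$ maximizing $d_W(v)=d^+_W(v)d^-_W(v)-2c_3^W(v)$ (where $c_3^W(v)$ is the number of directed triangles of $W$ through $v$), and recurse on $I_v,O_v$. The key estimate is $d_W(v)\ge-(|W|-1)/2$: summing $d_W(u)$ over $u\in W$ counts transitive triples minus three times the cyclic triples, i.e.\ $\binom{|W|}3-4c_3(W)$, and the bound $c_3(W)\le(|W|^3-|W|)/24$ on the number of cyclic triples gives $\sum_u d_W(u)\ge-\binom{|W|}2$, so the best $v$ has $d_W(v)\ge-(|W|-1)/2$. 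Hence every node contributes $(m_v-1)+d_T(v)\ge(m_v-1)/2$ to $D_T+\Sigma_T$, so $D_T+\Sigma_T\ge\tfrac12\sum_v(m_v-1)=\tfrac12 D_T$. Finally $D_T$ is at least the minimum total depth of an $n$-node binary tree, namely $n\log_2 n-O(n)$ (attained by the complete binary tree), so $T$ has at least $\tfrac12\binom n2-(n-1)+\tfrac14 D_T\ge\tfrac12\binom n2+\tfrac14 n\log_2 n-O(n)$ shortcuts; since $\tfrac1{4\ln 2}>\tfrac15$ the leading term beats $(n\ln n)/5$, and the additive constant $3122$ absorbs the $O(n)$ slack together with the small $n$ for which the asserted bound is negative and hence trivial. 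The construction is algorithmic, and maintaining degree sequences and triangle counts of the current subtournament while descending yields an $O(n^2)$ implementation. I expect the fiddly part to be extracting the explicit constant, since the two extreme configurations pull against each other: the complete binary tree keeps $D_T$ small, while forcing every $d_W(v)$ down to $-(|W|-1)/2$ forces each $W_v$ to be essentially a regular tournament — but the in- and out-neighbourhoods of a regular tournament are far from regular, so these cannot coexist, and one must exploit that tension.

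\textbf{Upper bound.} Here I would take $R$ a uniformly random tournament and show that with probability tending to $1$ every spanning shortcut tree has $D_T+\Sigma_T=O(n\log^2 n)$. Any subtournament that can occur as a node of a shortcut tree is an intersection of at most $K$ sets of the form $N^+(v)$ or $N^-(v)$; taking $K=\Theta(\log n)$ there are only $2^{O(\log^2 n)}$ such sets, so Chernoff plus a union bound shows that with high probability every one of size $m\ge m_0:=c\log^2 n$ has all in- and out-degrees within $O(\sqrt m\,\log n)$ of $m/2$, and also has $|d_W(v)|=|d^+_W(v)d^-_W(v)-2c_3^W(v)|=O(m\sqrt{\log n})$ for all $v$ — the latter because, given $I_v$ and $O_v$, the quantity $c_3^W(v)=e(O_v,I_v)$ is a sum of $|I_v||O_v|$ independent $\pm1$ variables. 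The degree bound forces every shortcut tree to split roughly evenly while the current subtournament still has at least $m_0$ vertices, so the part of $T$ on such subtournaments has depth $O(\log n)$ and contributes only $O(n\log n)+O(\sqrt{\log n})\cdot O(n\log n)=o(n\log^2 n)$ to $D_T+\Sigma_T$; and below it hang $O(n/m_0)$ subtrees, each on fewer than $m_0$ vertices, each contributing at most $\binom{m_0}2$ by the trivial bound $D_{T'}+\Sigma_{T'}\le\binom{|W_{T'}|}2$, for a total of $O(nm_0)=O(n\log^2 n)$. Thus $D_T+\Sigma_T=O(n\log^2 n)$ uniformly, so each shortcut tree has at most $\tfrac12\binom n2+O(n\log^2 n)$ shortcuts, and after fixing $c$ and absorbing lower-order terms this is at most $\tfrac12\binom n2+4n\log^2 n$. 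The main obstacle is the union bound: it must range only over the comparatively small family of subtournaments that shortcut trees can actually reach, and even so balance — hence shallowness — is guaranteed only down to scale $\log^2 n$; the $\Theta(n/\log^2 n)$ bottommost subtournaments at that scale, each worth up to $\Theta(\log^4 n)$ shortcuts, are exactly what makes the answer $n\log^2 n$ rather than $n\log n$.
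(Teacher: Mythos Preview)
Your identity
\[
\#\text{shortcuts}(T)=\tfrac12\binom n2-(n-1)+\tfrac12(D_T+\Sigma_T)
\]
is correct and is a clean way to organise both bounds. Your route differs from the paper's in several respects, and is in places more elegant.

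\textbf{Lower bound.} The paper picks at each step a vertex maximising $m(v)$ (the number of transitive triples with $v$ in the middle), sets up an explicit recurrence $z(n)$, and verifies $z(n)\ge n^2/4+(n\ln n)/5-3122$ by induction with a computer check up to $n\approx 62{,}000$. You instead pick $v$ maximising $d_W(v)=m_W(v)-c_3^W(v)$, use the averaging bound $\max_v d_W(v)\ge-(|W|-1)/2$, and then appeal to the extremal bound on the total depth of a binary tree. This is genuinely different and arguably cleaner: it avoids the recurrence entirely and actually delivers the better leading coefficient $1/(4\ln 2)\approx0.36>1/5$, so the target bound with constant $3122$ follows with slack.

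\textbf{Upper bound.} Your argument is close in spirit to the paper's (random tournament, Chernoff over all agreement sets), though you decompose into ``large'' and ``small'' subtournaments rather than summing level by level. One slip: with a union bound over $2^{\Theta(\log^2 n)}$ events you need failure probability $2^{-\Theta(\log^2 n)}$, hence deviation $\Theta(\sqrt{|I_v||O_v|}\,\log n)$, giving $|d_W(v)|=O(m\log n)$, not $O(m\sqrt{\log n})$. With this correction the ``top'' part contributes $O(n\log^2 n)$ (not $o$), but the final bound is unchanged. The paper bounds the depth more directly via the absence of a transitive subtournament on $3\log n$ vertices (the ancestor chain is transitive); this is simpler than your balance argument and avoids any circularity.

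\textbf{Algorithm.} Here there is a real gap. The paper's choice $\arg\max_v m(v)$ is automatically balanced (since $m(v)\le d^+(v)d^-(v)$ and the maximum $m(v)$ is quadratic in $|W|$, both degrees must be linear), so the recursion tree has logarithmic depth and the total work is $O(n^2)$. Your choice $\arg\max_v d_W(v)$ carries no such guarantee: in a transitive tournament every vertex has $d_W(v)=d^+d^-\ge0$, but in general the maximiser can have very skewed degrees, and ``maintaining triangle counts while descending'' is not obviously $O(n^2)$ along an unbalanced path. You would either need to argue that your maximiser is always balanced, or switch to the paper's rule (or any rule guaranteeing $d_W(v)\ge-(|W|-1)/2$ together with linear degrees) to recover the running time.
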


The rest of this paper consists of sections in which we prove the aforementioned theorems and some additional
results. Throughout the paper we only use standard graph theoretic terminology.
Some of the notations we use frequently are $d^+(v)$ and $d^-(v)$ to denote the out-degree
and in-degree of a vertex $v$, $N^+(v)$ and $N^-(v)$ to denote the set of out-neighbors and
the set of in-neighbors, and $G[X]$ to denote the sub-tournament of a tournament $G$ induced
by a vertex set $X$. An edge from $u$ to $v$ is denoted by $uv$.
The unique acyclic (thereby transitive) tournament with $n$ vertices is denoted by $T_n$.
Finally, $\ln n$ denotes the natural logarithm and $\log n$ denotes the base $2$ logarithm.

\section{Hops}

We first show the construction yielding the upper bound of Theorem \ref{t:1}.
Consider the case where $n$ is a multiple of $3$.
Take $n/3$ pairwise vertex-disjoint directed triangles denoted by $X_1,\ldots,X_{n/3}$.
Now, if $i < j$, orient all $9$ edges connecting $X_i$ and $X_j$ from $X_i$ to $X_j$.
Denote the resulting tournament by $R_n$. In every Hamiltonian path of $R_n$, all the vertices of $X_i$
precede all the vertices of $X_{i+1}$ for $i=1,\ldots,n/3-1$. Hence all three vertices of $X_i$ appear
consecutively, and since $X_i$ is a directed triangle, there is no hop in the sub-path induced by $X_i$.
So, the total number of hops is precisely $n-2-n/3=(2n-6)/3$. If $n \equiv 1 \pmod 3$ then define
$R_n$ by taking $R_{n-1}$ and adding a vertex of in-degree $n-1$. 
If $n \equiv 2 \pmod 3$ then define
$R_n$ by taking $R_{n-2}$ and adding a vertex of in-degree $n-2$ and then a vertex of in-degree $n-1$.
As the number of hops of every Hamiltonian path of $R_n$ is precisely $\lceil (2n-6)/3 \rceil$,
the upper bound of Theorem \ref{t:1} follows.

In the following lemmas and proofs we use the notation $h(G)$ for the maximum number of shortcuts in a
Hamiltonian path of a tournament $G$.
\begin{lemma}\label{l:h7}
	For all $n \ge 7$ it holds that
	$$
	h(n) \ge \min_{\frac{1}{4}n-\frac{1}{2} \le x \le \frac{3}{4}n-\frac{1}{2}} h(x)+h(n-1-x)+2\;.
	$$
	If $G$ is a tournament with $n$ vertices and with a source or a sink then $h(G) \ge h(n-1)+1$ and if it has a vertex with out-degree or in-degree $1$ then $h(G) \ge h(n-2)+1$.
\end{lemma}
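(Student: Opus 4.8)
The plan is to exploit the recursive (divide-and-conquer) construction of a Hamiltonian path described in the introduction, but with a carefully chosen pivot vertex so that the two recursive sub-tournaments are balanced. First I would pick a vertex $v$ in the $n$-vertex tournament $G$ whose out-degree $x = d^+(v)$ lies in the interval $[\frac14 n - \frac12,\ \frac34 n - \frac12]$; such a vertex exists by a standard median/averaging argument on the out-degree sequence (the average out-degree is $(n-1)/2$, which sits in this interval, and one can always find a vertex whose out-degree is at least the median and another at most the median, so in particular one in the stated range — more carefully, order the vertices by out-degree and take the middle one). Let $A = N^-(v)$ with $|A| = n-1-x$ and $B = N^+(v)$ with $|B| = x$. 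Recursively take a Hamiltonian path $P_A$ of $G[A]$ with $h(G[A]) \ge h(n-1-x)$ shortcuts and a Hamiltonian path $P_B$ of $G[B]$ with $h(G[B]) \ge h(x)$ shortcuts, and form the Hamiltonian path $P = P_A\, v\, P_B$ of $G$.

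The key observation is that $P$ gains \emph{two} extra shortcuts beyond those inherited from $P_A$ and $P_B$: if $P_A$ ends at a vertex $a$ and $P_B$ begins at a vertex $b$, then the last vertex $a'$ before $a$ on $P_A$ satisfies $a'v$ is an edge (since $a' \in A = N^-(v)$), giving a hop $a' v$ that is a shortcut of $P$ not present in $P_A$; symmetrically $vb'$ is a hop where $b'$ is the second vertex of $P_B$, since $b' \in B = N^+(v)$. (When a side is empty or has a single vertex one has to treat the boundary cases, but $n \ge 7$ together with the out-degree bound forces both $|A|$ and $|B|$ to be at least $2$, so both extra hops are genuinely available.) Hence $h(G) \ge h(x) + h(n-1-x) + 2$, and taking the minimum over the allowed range of $x$ — and then the minimum over all $n$-vertex tournaments — yields the displayed inequality for $h(n)$.

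For the second part, if $G$ has a source $v$ (out-degree $n-1$) or a sink $v$ (in-degree $n-1$), remove $v$, take a Hamiltonian path of $G-v$ with $h(n-1)$ shortcuts, and prepend (resp. append) $v$; the edge from $v$ to the second vertex of the path (resp. from the second-to-last vertex into $v$) is a hop, giving at least one new shortcut, so $h(G) \ge h(n-1) + 1$. If instead $G$ has a vertex $v$ with $d^+(v) = 1$ (the case $d^-(v) = 1$ is symmetric), let $w$ be the unique out-neighbor of $v$; remove both $v$ and $w$, take a Hamiltonian path $Q$ of $G - \{v,w\}$ with $h(n-2)$ shortcuts, and note that $v$ is a sink of $G - w$ while $w$ dominates everything of $G-v$ that $v$ dominates — place the pair at the end of $Q$ as $\cdots\, v\, w$: the penultimate-before-$v$ vertex $u$ of $Q$ has $uv$ an edge (since $u$ is an in-neighbor of $v$, as $v$'s only out-neighbor is $w$), so $uv$ is a hop and a new shortcut, giving $h(G) \ge h(n-2) + 1$.

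The main obstacle I anticipate is the existence claim for a pivot vertex with out-degree in the prescribed window $[\frac14 n - \frac12,\ \frac34 n - \frac12]$ while simultaneously ensuring both sides have size at least $2$ so that both bonus hops are realizable; the window has been chosen precisely so that for $n \ge 7$ both $x \ge 2$ and $n-1-x \ge 2$ hold, but one must check the small cases and the rounding carefully. A secondary technical point is the careful bookkeeping of the two new hops at the junction — making sure they are not already counted among the shortcuts of $P_A$ or $P_B$ (they cannot be, since they are incident to $v \notin A \cup B$) and that the relevant second/second-to-last vertices exist (guaranteed by $|A|, |B| \ge 2$).
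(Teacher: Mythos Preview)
Your proposal is correct and follows essentially the same approach as the paper: pick a pivot $v$ with out-degree in the stated window, concatenate optimal Hamiltonian paths of $G[N^-(v)]$ and $G[N^+(v)]$ around $v$, and collect the two extra hops $a'v$ and $vb'$ at the junction; the paper handles the source/sink and degree-$1$ cases in the same spirit (it merely says ``proved in a similar manner''), and your expansion of those cases is fine. One small slip: you repeatedly write ``shortcuts'' where you mean ``hops'' (the parameter $h(\cdot)$ counts hops), and your phrase ``penultimate-before-$v$ vertex'' should more precisely be ``the second-to-last vertex of $Q$''---but the intended argument is clear and matches the paper.
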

\begin{proof}
	Let $G$ be a tournament with $n \ge 7$ vertices.
	It is well-known, and easy to verify that $G$ has a vertex $v$ with
	$\frac{1}{4}n-\frac{1}{2} \le d^+(v) \le \frac{3}{4}n-\frac{1}{2}$.
	Let $v$ be such a vertex and let $x=d^+(v)$. Notice that $x \ge 2$ and $n-x-1 \ge 2$.
	Consider the tournaments $G_1=G[N^+(v)]$ and $G_2=G[N^{-}(v)]$ where $G_1$ has $x$ vertices and $G_2$ has $n-x-1$ vertices.
	Let $P_1$ be a Hamiltonian path of $G_1$ with at least $h(x)$ hops and
	let $P_2$ be a Hamiltonian path of $G_2$ with at least $h(n-x-1)$ hops.
	Consider now the Hamiltonian path of $G$ formed by concatenating $P_2,v,P_1$.
	Then it has at least $h(x)+h(n-1-x)+2$ hops since $x$ and the second vertex of $P_1$ form a hop
	and also $x$ and the second to last vertex of $P_2$ form a hop.
	The second part of the lemma is proved in a similar manner.
\end{proof}

\begin{lemma}\label{l:h10}
	For all $2 \le n \le 10$, we have $h(n)=\lceil (2n-6)/3 \rceil$ and furthermore, for $n=3,6$ the unique extremal tournament is $R_n$.
\end{lemma}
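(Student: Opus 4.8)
The plan is to prove the claim by explicit, case-by-case analysis for each value of $n$ in the range $2 \le n \le 10$, combining the recursive lower bound of Lemma \ref{l:h7} with small ad hoc arguments and, where necessary, a finite check. First I would handle the very small cases $n = 2, 3, 4, 5$ directly: for $n=2$ and $n=3$ the target value $\lceil (2n-6)/3 \rceil$ is $0$, so there is nothing to prove for the lower bound, and the upper bound $h(3) = 0$ together with uniqueness of $R_3$ among $3$-vertex tournaments (the only tournament with no Hamiltonian-path hop on $3$ vertices is the directed triangle) is immediate by inspection. For $n = 4, 5$ the target is $1$, and any tournament on $4$ or $5$ vertices has a source or a near-source to which the second part of Lemma \ref{l:h7} applies, giving $h(G) \ge h(n-1) + 1 \ge 1$ (or one argues directly that $R_n$ as defined in Section 2 forces exactly $\lceil(2n-6)/3\rceil$ hops, matching the upper bound already established).

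For the middle range $n = 7, 8, 9, 10$ I would invoke the recursive inequality of Lemma \ref{l:h7}:
\[
h(n) \;\ge\; \min_{\frac14 n - \frac12 \,\le\, x \,\le\, \frac34 n - \frac12} \; h(x) + h(n-1-x) + 2 .
\]
Since every split $x, n-1-x$ occurring in this minimum has both parts strictly smaller than $n$ and at least $2$, and since the values $h(2), \dots, h(n-1)$ will already have been determined (an induction on $n$ within the lemma), this reduces the lower bound for $h(n)$ to a finite arithmetic check: for each admissible $x$ one plugs in the known values $\lceil (2x-6)/3 \rceil$ and $\lceil (2(n-1-x)-6)/3 \rceil$, adds $2$, and verifies that the minimum over the (short) list of admissible $x$ is at least $\lceil (2n-6)/3 \rceil$. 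A quick sanity check: for $n = 9$ the admissible range is $2 \le x \le 6$; the worst split is around $x = 3, n-1-x = 5$, giving $0 + 1 + 2 = 3$, while the target $\lceil 12/3 \rceil = 4$ — so the naive recursion is \emph{not} quite enough, and this is exactly where the main work lies.

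\textbf{The main obstacle}, as the computation above signals, is that the plain recursion of Lemma \ref{l:h7} loses too much at the balanced splits for several values of $n$ in $\{7,8,9,10\}$. To close the gap I would strengthen the argument in two ways. First, in the concatenation $P_2, v, P_1$ one can often gain extra hops beyond the two guaranteed: if $G_1 = G[N^+(v)]$ or $G_2 = G[N^-(v)]$ is itself not the extremal tournament, its Hamiltonian path has strictly more than $h(x)$ hops; so the genuinely dangerous case is when \emph{both} induced sub-tournaments are (close to) the extremal $R_x$, and in that highly structured situation one can choose $v$ more cleverly — e.g. pick $v$ to be a vertex that is a source or a sink, or has in/out-degree $1$, inside one of the blocks, or re-split using a different pivot — to recover the missing hop. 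Second, for the handful of residual configurations I would fall back on a direct structural analysis: an $n$-vertex tournament with no source, no sink, no vertex of in/out-degree $1$, and both neighborhoods isomorphic to transitive-triangle blocks is so rigid (it must look essentially like $R_n$ with small perturbations) that one can simply exhibit the required Hamiltonian path by hand. The uniqueness of $R_n$ as the extremal example for $n = 3, 6$ then follows by tracing through these case distinctions and observing that equality $h(G) = \lceil (2n-6)/3 \rceil$ forces every choice above to be tight, which pins down the block structure of $R_3$ and $R_6$. In the write-up this finite verification can be organized compactly as a table of $(n, x)$ pairs with the resulting bounds, leaving only the equality/uniqueness discussion to be spelled out in prose.
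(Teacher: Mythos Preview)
Your high-level strategy --- case-by-case, use the recursion of Lemma~\ref{l:h7} where it suffices, and fall back on direct structural analysis where it does not --- is exactly the paper's approach. However, the proposal has two concrete gaps that would prevent it from going through as written.

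First, there is an arithmetic slip: for $n=5$ the target is $\lceil (2\cdot 5 - 6)/3\rceil = 2$, not $1$. This matters, because with the correct value $h(5)=2$ your diagnostic example for $n=9$ actually gives $h(3)+h(5)+2 = 0+2+2 = 4$, meeting the target; in fact every admissible split for $n=9$ yields $4$, so the recursion alone \emph{is} enough there. The genuinely bad splits are $x=3$ for $n=7$, $x\in\{3,4\}$ for $n=8$, and $x\in\{3,6\}$ for $n=10$, and each of these requires a tailored argument (e.g.\ for $n=7$ one is forced into the $3$-regular case, and must separate the Paley tournament from the rest).

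Second, and more seriously, the case $n=6$ is missing entirely from your plan: it is neither in your ``very small'' list $\{2,3,4,5\}$ nor in your ``middle range'' $\{7,8,9,10\}$, and Lemma~\ref{l:h7} does not apply to it. In the paper this case carries the heaviest direct analysis, and the \emph{uniqueness} of $R_6$ as the extremal tournament on six vertices --- part of the lemma's statement --- is not a by-product of tracing equality cases through a recursion; it is proved by hand (showing that if no vertex has out-degree $2$ or $3$ then the degree sequence forces $R_6$), and it is then \emph{used} as an input to the arguments for $n=8$ and $n=10$. Your proposed fix (``choose $v$ more cleverly'', ``exhibit the path by hand'') is pointed in the right direction but does not yet isolate the structure that makes those residual cases work; the paper's proof shows that what one actually exploits in the bad splits is precisely that a sub-tournament with too few hops must be $R_3$ or $R_6$, which then contradicts the assumed degree constraints on the ambient tournament.
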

\begin{proof}
	The lemma trivially holds for $n=2,3$ and observe that $h(3)=0$ is only obtained by $C_3=R_3$.
	It is also easy to verify that $h(4)=1$. For $n=5$, consider a $5$-vertex tournament $G$. If $G$ has a vertex of out-degree $0$ or a vertex of out-degree $4$ then $h(G) \ge 2$ follows from $h(4)=1$. Otherwise, $G$ must have a vertex $v$ of out-degree $2$
	(since some out-degree in a $5$-vertex tournament must be even). Then a Hamiltonian path where the first two vertices are the in-neighbors of $v$ and the last two vertices are the out-neighbors of $v$ shows that $h(G) \ge 2$. As we also have $h(R_5)=2$ we obtain $h(5)=2$.
	Consider next a tournament $G$ on $6$ vertices. If $G$ has a vertex $v$ with
	$2 \le d^+(v) \le 3$ then assume without loss of generality that $d^+(v)=3$ (as otherwise $d^-(v)=3$ and
	the proof is analogous). Suppose $x,y,z$ are the out-neighbors of $v$ and that w.l.o.g. $xy,yz$ are edges.
	Suppose that $u,w$ are the in-neighbors of $v$ and that w.l.o.g. $uw$ is an edge. If $xz$ is an edge
	then $u,w,v,x,y,z$ is a Hamiltonian path with three hops. Otherwise, $zx$ is an edge and $x,y,z$ induce a $C_3$. If $w$ has an out-neighbor in $\{x,y,z\}$ then assume without loss of generality that it is
	$x$, then $u,w,v,x,y,z$ is a Hamiltonian path with three hops. Otherwise, each of $x,y,z$ is an in-neighbor of $w$. If $u$ has an in-neighbor in $\{x,y,z\}$ then assume without loss of generality that it is $x$, then $x,u,w,v,y,z$ is a Hamiltonian path with three hops. Otherwise, $u,v,x,y,z,w$ 
	is a Hamiltonian path with three hops. Assume next that all the out-degrees of $G$ are either $0,1,4,5$.
	So three of them must be $1$ and three of them must be $4$ and $G$ must therefore be $R_6$.
	
	Consider next a tournament $G$ on $7$ vertices. If $G$ has a vertex of out-degree in $\{0,1,2,4,5,6\}$ then $h(G) \ge 3$ using the recursive construction in the proof of Lemma \ref{l:h7}. Otherwise, $G$ is a regular tournament. If $G$ has a vertex whose out-neighbors induce a $T_3$ then the recursive construction in the proof of Lemma  \ref{l:h7} also gives
	$h(G) \ge 3$. Otherwise, $G$ must be the Paley tournament and in fact $h(G)=4$ in this case.
	We therefore have $h(7)=3$.
	
	Consider next a tournament $G$ on $8$ vertices. If there is an out-decree in $\{0,2,5,7\}$ we have
	$h(G) \ge 4$ by the recursive construction in the proof of Lemma \ref{l:h7}. Otherwise, all out-degrees are in $\{1,3,4,6\}$. If there is a vertex $v$ with out-degree $6$ (or, analogously, out-degree $1$) then either $h(G) \ge 4$ or else by the
	extremity of $R_6$, the out-neighbors of $v$ induce $R_6$. Furthermore, if $u$ is the unique in-neighbor of $v$ then all three vertices of $X_1$ (the first triangle of $R_6$) are in-neighbors of $u$ (as otherwise we have $h(G) \ge 4$). But if this is the case, then the out-degree of each vertex of $X_1$ is
	$5$ which we assume is not the case. So, we remain with the case where all the out-degrees of $G$ are
	in $\{3,4\}$ and we wish to prove that $h(G) \ge 4$ in this case as well. Assume otherwise, that
	$h(G)=3$. Assume without loss of generality that $v$ has out-degree $4$ (otherwise it has in-degree $4$ and the proof is analogous). If we have $h(G)=3$ then we must have that $G_2=G[N^{-}(v)]$ has
	$h(G_2)=0$  so it induces a $C_3$, and we must have that $G_1=G[N^{+}(v)]$ has $h(G_1)=1$
	so it has a source or a sink.  If it has a source, then that source is the first vertex of any Hamilton
	path of $G_1$, so all the vertices of $G_2$ are out-neighbors of the source, but then the out-degree of the source in $G$ is $6$, contradiction. If it has a sink then the other three vertices of $G_1$
	call them $x,y,z$ must induce a $C_3$ and each of them can appear as the first vertex of a Hamiltonian path of $G_1$. So, say, $x$ has all the vertices of $G_2$ as out-neighbors. But then the out-degree of $x$ in $G$ in $5$, a contradiction. In any case, we have $h(8)=4$.
	
	For $n=9$ we have $h(9) \ge h(8)=4$ but on the other hand, $R_9$ shows that $h(9) \le 4$ and therefore $h(9)=4$.
	
	For $n=10$, If the out-degree is one of $[9] \setminus \{3,6\}$ then we have $h(G) \ge 5$ by the recursive construction in the proof of Lemma \ref{l:h7}. So, we can assume that five vertices have out-degree $3$ and five vertices have out-degree $6$. Assume without loss of generality that $d^+(v)=6$. Assume for contradiction that $h(G)=4$.
	Then $G_1=G[N^{+}(v)]$ must be $R_6$ and $G_2=G[N^{-}(v)]$ must be $R_3=C_3$. But then the vertices $X_1$, the first triangle of $R_6$, must have all the vertices of $G_2$ as out-neighbors and hence their out-degree in $G$ is $7$, contradicting the assumption. Hence we have $h(10)=5$.
\end{proof} 

\begin{lemma}\label{l:lower}
	For all $n \ge 4$, $h(n) \ge (4n-10)/7$.
\end{lemma}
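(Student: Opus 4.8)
The plan is to prove the inequality $h(n)\ge g(n)$, where I write $g(m)=(4m-10)/7$, by induction on $n$, using the recursion of Lemma~\ref{l:h7} as the engine. The ratio $4/7$ is tailored so that this recursion reproduces itself: for every real $x$ one has the identity
$$g(x)+g(n-1-x)+2=g(n),$$
so if Lemma~\ref{l:h7} produces a split of a tournament $G$ into sub-tournaments of orders $x$ and $n-1-x$ for which $h(x)\ge g(x)$ and $h(n-1-x)\ge g(n-1-x)$ already hold, then $h(n)\ge h(x)+h(n-1-x)+2\ge g(n)$ and we are done. Note that only the first (balanced-split) clause of Lemma~\ref{l:h7} is needed; the source/sink and degree-one clauses lose too much, since the degree-one clause only yields $h(n-2)+1$, which falls $1/7$ short of $g(n)$.

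For the base of the induction I would handle all $n$ with $4\le n\le 14$. For $4\le n\le 10$ Lemma~\ref{l:h10} gives $h(n)=\lceil(2n-6)/3\rceil$, and comparing with $g(n)$ is immediate: $7(2n-6)\ge 3(4n-10)$ holds exactly for $n\ge 6$, while $h(4)=1\ge 6/7$ and $h(5)=2\ge 10/7$. For $11\le n\le 14$, Lemma~\ref{l:h7} supplies a vertex $v$ of out-degree $x$ with $\frac14 n-\frac12\le x\le \frac34 n-\frac12$; in each of these four ranges every admissible integer $x$ satisfies $2\le x,\,n-1-x\le 10$, so $h(x)$ and $h(n-1-x)$ are the explicit values of Lemma~\ref{l:h10}, and a short finite verification confirms $h(x)+h(n-1-x)+2\ge\lceil g(n)\rceil$ for every such $x$. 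The tight instances are $x=3$ and $x=n-4$, where one sub-tournament is a directed triangle and contributes no hops.

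For the inductive step, let $n\ge 15$ and assume the bound for all orders $m$ with $4\le m<n$. Lemma~\ref{l:h7} gives a vertex $v$ with $\frac14 n-\frac12\le d^+(v)\le \frac34 n-\frac12$; set $x=d^+(v)$. Because $\lceil\frac14 n-\frac12\rceil\ge 4$ and $\lfloor\frac34 n-\frac12\rfloor\le n-5$ for $n\ge 15$, both $G[N^+(v)]$ (on $x$ vertices) and $G[N^-(v)]$ (on $n-1-x$ vertices) have order between $4$ and $n-1$, so the inductive hypothesis applies to each. Substituting $h(x)\ge g(x)$ and $h(n-1-x)\ge g(n-1-x)$ into the recursion and invoking the identity above yields $h(n)\ge g(n)$, closing the induction.

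The single genuine obstacle is the seam between the base cases and the inductive step. When a part of the split has exactly three vertices and is a directed triangle it contributes no hops, so the self-reproducing identity loses $2/7$ — more than integrality of $h$ can recover against the real bound $(4n-10)/7$. This is precisely why the general inductive step is run only for $n\ge 15$, where the balanced range $[\frac14 n-\frac12,\,\frac34 n-\frac12]$ of Lemma~\ref{l:h7} is already wide enough to keep both sides of the split at size at least $4$, and why the sporadic orders $11\le n\le 14$ must be checked directly against the exact small values of Lemma~\ref{l:h10}.
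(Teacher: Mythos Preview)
Your proof is correct and follows essentially the same approach as the paper: induction on $n$ using the balanced-split recursion of Lemma~\ref{l:h7}, with Lemma~\ref{l:h10} supplying the base values for $4\le n\le 10$, a direct finite check (via Lemma~\ref{l:h7} and the small values) for $11\le n\le 14$, and the self-reproducing identity $g(x)+g(n-1-x)+2=g(n)$ driving the inductive step for $n\ge 15$ once both parts have size at least $4$. The paper records the intermediate bounds $h(11)\ge 5$, $h(12)\ge 6$, $h(13)\ge 6$, $h(14)\ge 7$ rather than spelling out the case check, but the argument is the same.
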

\begin{proof}
	We prove the lemma by induction on $n$.
	The assertion holds for all $4 \le n \le 10$ by Lemma \ref{l:h10}.
	Using Lemma \ref{l:h7} we also obtain that $h(11) \ge 5$, $h(12) \ge 6$, $h(13) \ge 6$ and $h(14) \ge 7$.
	For $n \ge 15$, we have by Lemma \ref{l:h7} that for some
	$\frac{1}{4}n-\frac{1}{2} \le x \le \frac{3}{4}n-\frac{1}{2}$ it holds that $h(n) \ge h(x)+h(n-1-x)+2$.
	Observe that $x \ge 4$ and $n-1-x \ge 4$. Hence by induction we obtain that
	$$
	h(n) \ge \frac{4x-10}{7} + \frac{4(n-1-x)-10}{7}+2 = \frac{4n-10}{7}\;.
	$$
\end{proof}
\noindent
Lemma \ref{l:lower} and the construction of $R_n$ together give Theorem \ref{t:1}. \qed

\section{Shortcuts and shortcut trees}

In this section we prove Theorem \ref{t:3}.
To prove the lower bound of Theorem \ref{t:3} we need the following lemma.
For a vertex $v$ of a tournament $G$, let $m(v)$ denote the number of $T_3$ of $G$ having $v$ as the middle vertex, namely $v$ is neither the source nor the sink of the $T_3$.
\begin{lemma}\label{l:mv}
	Let $G$ be a tournament with $n$ vertices. There exists a vertex $v$ with $m(v)$ at least:
	\begin{itemize}
		\item $(n-1)(n-3)/8$ if $n$ is odd.
		\item $\lceil (n-2)^2/8 \rceil$ if $n$ is even.
	\end{itemize}
\end{lemma}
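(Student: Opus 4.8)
The plan is to compute the sum $\sum_{v} m(v)$ exactly and then extract a good vertex by averaging, controlling the resulting expression with a convexity estimate on the out-degree sequence.

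First I would reinterpret $m(v)$ combinatorially. A copy of $T_3$ on a vertex triple $\{x,y,z\}$ has a unique source (out-degree $2$ inside the triple), a unique sink (in-degree $2$), and a unique middle vertex (in-degree and out-degree $1$ inside the triple); thus each $T_3$ of $G$ is counted by $m(v)$ for exactly one vertex $v$, namely its middle vertex. Hence $\sum_{v} m(v)$ equals the total number of copies of $T_3$ in $G$. I would then recount the copies of $T_3$ by their \emph{source}: for a fixed vertex $u$, any two out-neighbours of $u$ span, together with $u$, a triangle in which $u$ is the source and which is therefore transitive, and conversely every $T_3$ with source $u$ arises this way; so the number of $T_3$'s with source $u$ is exactly $\binom{d^+(u)}{2}$. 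Summing over $u$ gives $\sum_{v} m(v) = \sum_{v} \binom{d^+(v)}{2}$.

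Since $G$ has $n$ vertices, some vertex $v$ satisfies $m(v) \ge \frac1n \sum_{w} \binom{d^+(w)}{2}$, so it suffices to lower bound the right side. Using $\sum_{w} d^+(w) = \binom{n}{2}$ together with the convexity of $x \mapsto \binom{x}{2}$ on the integers --- by the standard exchange argument, if two out-degrees differ by at least $2$, shifting one unit from the larger to the smaller strictly decreases $\sum_{w}\binom{d^+(w)}{2}$, since $\binom{a}{2}+\binom{b}{2}-\binom{a-1}{2}-\binom{b+1}{2}=(a-1)-b>0$ when $a\ge b+2$ --- the minimum is attained at the most balanced integer degree sequence. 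For odd $n$ this is the all-$\frac{n-1}{2}$ sequence, giving $\sum_{w}\binom{d^+(w)}{2} \ge n\binom{(n-1)/2}{2} = n(n-1)(n-3)/8$; for even $n$ it is the sequence with $n/2$ out-degrees equal to $\frac{n-2}{2}$ and $n/2$ equal to $\frac{n}{2}$, giving $\sum_{w}\binom{d^+(w)}{2} \ge \frac{n}{2}\binom{(n-2)/2}{2} + \frac{n}{2}\binom{n/2}{2} = n(n-2)^2/8$. Dividing by $n$ yields $m(v) \ge (n-1)(n-3)/8$ for odd $n$ and $m(v) \ge (n-2)^2/8$ for even $n$. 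In the odd case the bound is already an integer (one of $n-1,\,n-3$ is divisible by $4$), and in the even case we may round it up since $m(v)$ is an integer, which gives the stated $\lceil (n-2)^2/8\rceil$. I do not anticipate a serious obstacle: the only points requiring care are the bijective bookkeeping behind $\sum_{v} m(v) = \sum_{v} \binom{d^+(v)}{2}$ and a clean statement of the convexity/integrality argument that pins down the extremal degree sequence.
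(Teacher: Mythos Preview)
Your proof is correct and follows essentially the same route as the paper: both use $\sum_v m(v)=\sum_v \binom{d^+(v)}{2}$ (counting each $T_3$ once by its middle vertex and once by its source), minimize the right side via convexity at the balanced out-degree sequence, and then average. Your write-up is a bit more explicit about the exchange argument and the integrality step that justifies the ceiling in the even case, but the argument is identical in substance.
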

\begin{proof}
	If $L$ denotes the number of $T_3$ in $G$, then
	$$
	\sum_{v \in V(G)} m(v) = L = \sum_{v \in V(G)} \binom{d^+(v)}{2}\;.
	$$
	Since $\binom{x}{2}$ is convex, the right hand side of the last equality is minimized when all
	$d^+(v)$ are as equal as possible. So, when $n$ is odd each is $(n-1)/2$ and when $n$ is even
	half are $n/2-1$ and half are $n/2$. Hence, when $n$ is odd, the last sum is always at least
	$n(n-1)(n-3)/8$ so the result holds by averaging and when $n$ is even the last sum is always at least $n(n-2)^2/8$ so the result holds by averaging.
\end{proof}

Define $z(n)$ recursively to be the following function of a positive integer $n$.
$z(1)=z(2)=0$.
If $n \ge 3$ then
$$
z(n) = \min_{x = 1,\ldots,\lfloor (n-1)/2 \rfloor} \left \lceil \frac{(n+7)(n-3)}{8} \right\rceil + z(x)+ z(n-x-1)\;.
$$
\begin{lemma}\label{l:hnzn}
	$t(n) \ge z(n)$ for all $n \ge 1$.
\end{lemma}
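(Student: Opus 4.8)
The plan is to prove $t(n) \ge z(n)$ by strong induction on $n$, mirroring the recursive structure that defines $z(n)$. The base cases $n=1,2$ are immediate since $z(1)=z(2)=0$ and a spanning shortcut tree on one or two vertices trivially has at least zero shortcuts. For the inductive step with $n \ge 3$, the natural move is to pick a good ``root'' vertex $v$ for the spanning shortcut tree and then recurse on the sub-tournaments $G_1 = G[N^-(v)]$ and $G_2 = G[N^+(v)]$ induced by its in-neighbors and out-neighbors. This is exactly the recursive construction of a spanning shortcut tree described in the introduction: $v$ becomes the root, a spanning shortcut tree of $G_1$ hangs off as the left subtree, and a spanning shortcut tree of $G_2$ hangs off as the right subtree. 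The in-order Hamiltonian path is then (path of $G_1$), $v$, (path of $G_2$).

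The key accounting step is to count the shortcuts of the resulting tree. First, every shortcut internal to the left subtree or internal to the right subtree is preserved, contributing at least $z(x) + z(n-1-x)$ by induction, where $x = d^-(v)$ (so $1 \le x \le n-1$, and by the recursion I may take $x \le \lfloor (n-1)/2 \rfloor$ after relabeling in/out if needed). Second, I must account for the new shortcuts created by inserting $v$ and by edges crossing between the two sides. The crucial observation is that the $T_3$'s having $v$ as middle vertex correspond precisely to shortcuts: if $v$ is the middle of a transitive triple $a \to v \to b$ with $a \in N^-(v)$ and $b \in N^+(v)$, then $a$ precedes $v$ precedes $b$ along the in-order path, so $a \to b$ is a shortcut of length two over $v$. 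Thus $m(v)$ counts exactly these crossing shortcuts through $v$, and Lemma \ref{l:mv} guarantees a vertex $v$ with $m(v) \ge \lceil (n+7)(n-3)/8 \rceil$ — here one checks that the odd and even bounds $(n-1)(n-3)/8$ and $\lceil(n-2)^2/8\rceil$ both dominate the cleaner expression $\lceil (n+7)(n-3)/8\rceil$ used in $z(n)$; this reconciliation of the two parity cases into a single bound is a routine but necessary arithmetic check.

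Putting these together, for the chosen $v$ the constructed tree has at least
$$
\left\lceil \frac{(n+7)(n-3)}{8} \right\rceil + z(x) + z(n-1-x)
$$
shortcuts, and taking the minimum over admissible $x$ recovers exactly the definition of $z(n)$, completing the induction. The main subtlety I anticipate is making sure that the shortcuts counted by $m(v)$ are genuinely \emph{new} and not double-counted against the recursively guaranteed shortcuts inside $G_1$ and $G_2$: a shortcut counted by $m(v)$ joins a vertex of $N^-(v)$ to a vertex of $N^+(v)$, hence lies across the two subtrees and is disjoint from the sets of edges internal to either subtree, so no overcounting occurs. A secondary point requiring care is verifying that every edge counted by $m(v)$ really is a shortcut of the in-order path (length at least two), which follows because $v$ itself always sits strictly between the two endpoints; there is no danger of such a pair being consecutive. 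Once these disjointness and genuine-shortcut facts are nailed down, the inequality is a direct substitution, so the heart of the argument is really the combinatorial identification of ``middle-vertex $T_3$'s through the root'' with ``crossing shortcuts,'' supplied by Lemma \ref{l:mv}.
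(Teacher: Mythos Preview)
Your overall strategy matches the paper's: choose a root $v$ maximizing $m(v)$, recurse on $G[N^-(v)]$ and $G[N^+(v)]$, and count crossing $T_3$'s as shortcuts. However, your accounting has a genuine gap. You assert that Lemma~\ref{l:mv} yields $m(v)\ge \lceil (n+7)(n-3)/8\rceil$, claiming that the parity bounds $(n-1)(n-3)/8$ and $\lceil (n-2)^2/8\rceil$ ``dominate'' this expression. They do not: for $n>3$ one has $(n-1)(n-3)/8 < (n+7)(n-3)/8$ (e.g.\ at $n=5$ the values are $1$ versus $3$). So the arithmetic check you call ``routine'' actually fails, and with only $m(v)$ plus the two recursive terms you fall short of $z(n)$ by exactly $n-3$.

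The missing $n-3$ comes from a class of shortcuts you never mention: the edges \emph{incident to $v$ itself}. In the in-order Hamiltonian path, every edge $uv$ with $u\in N^-(v)$ goes forward, and every edge $vw$ with $w\in N^+(v)$ goes forward; of these $n-1$ forward edges, exactly two lie on the path (the predecessor and successor of $v$), leaving $n-3$ genuine shortcuts. These are disjoint both from the internal shortcuts of the two subtrees and from the $m(v)$ crossing edges (which join $N^-(v)$ to $N^+(v)$ and avoid $v$). Adding them in gives
\[
m(v)+(n-3)\;\ge\;\frac{(n-1)(n-3)}{8}+(n-3)\;=\;\frac{(n+7)(n-3)}{8},
\]
which is precisely the term in the definition of $z(n)$. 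Once you insert this extra count, your argument coincides with the paper's.
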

\begin{proof}
	We use induction on $n$. The cases $n=1,2,3$ hold trivially since $t(n)=z(n)=0$ in these cases.
	So we assume that $n \ge 4$ and that the lemma holds for all positive integers smaller than $n$.
	Suppose now that $G$ is a tournament with $n$ vertices.
	Let $v$ be a vertex with $m(v)$ at least as large as the value guaranteed by Lemma \ref{l:mv}
	and observe that $m(v) \ge 1$.
	Let $x=d^+(v)$ and consider the tournaments $G_1=G[N^+(v)]$ and $G_2=G[N^{-}(v)]$
	where $G_1$ has $x$ vertices and $G_2$ has $n-x-1$ vertices.
	Notice that $n-2 \ge x \ge 1$ since $m(v) \ge 1$.
	
	Let $P_1$ be a spanning shortcut tree of $G_1$ with at least $t(x)$ shortcuts and let $P_2$
	be a spanning shortcut tree of $G_2$ with at least $t(n-x-1)$ shortcuts.
	Consider now the binary tree $P$ where $v$ is the root, its left subtree is
	$P_2$ and its right subtree is $P_1$. So $P$ is a spanning shortcut tree of $G$.
	Now, all the shortcuts of $P_1$ and $P_2$ remain shortcuts of $P$. In addition, for each
	$T_3$ of $G$ of the form $u,v,w$ the edge $uw$ is a shortcut edge of $P$ and notice that there are
	$m(v)$ such shortcut edges. Finally, every edge of the form $uv$ where $u \in N^{-}(v)$
	is not the root of $P_2$ is a shortcut edge of $P$ and so is every edge of the form
	$vw$ where $w \in N^{+}(v)$ is not the root of $P_1$. Altogether, these are
	$n-3$ additional shortcut edges. So the number of shortcut edges of $P$ is at least $t(x)+t(n-x-1)+m(v)+n-3$. Thus, when $n$ is odd we have by the induction hypothesis, by the definition of $z(n)$ and by Lemma \ref{l:mv} that
	\begin{align*}
	& ~~~~ t(x)+t(n-x-1)+m(v)+n-3\\
	& \ge z(x)+z(n-x-1)+ \frac{(n-1)(n-3)}{8}+n-3\\
	&  = z(x)+z(n-x-1)+ \frac{(n+7)(n-3)}{8}\\
	& \ge z(n)\;.
	\end{align*}
	As the argument holds for an arbitrary tournament with $n$ vertices we have that
	$t(n) \ge z(n)$ when $n$ is odd. The same argument holds in the even case where we use
	the bound $\lceil (n-2)^2/8 \rceil$ of Lemma \ref{l:mv}.
\end{proof}

By Lemma \ref{l:hnzn}, to obtain a lower bound for $t(n)$ it suffices to obtain a lower bound for
$z(n)$.  This is established in the following lemma.
\begin{lemma}\label{l:z-lower}
	$z(n) \ge n^2/4+(n\ln n)/10 - 80$ for all $n \ge 1$.
	Similarly, $z(n) \ge n^2/4+(n\ln n)/5 - 3122$ for all $n \ge 1$.
\end{lemma}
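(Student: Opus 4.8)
The plan is to disentangle the two difficulties in the recursion defining $z$: the minimization over the split, and the size of the resulting (balanced) recursion. Set $g(m):=(m+7)(m-3)/8$ and define an auxiliary function by $w(1)=w(2)=0$ and, for $n\ge 3$, $w(n):=g(n)+w(\lfloor(n-1)/2\rfloor)+w(\lceil(n-1)/2\rceil)$ — this is the value obtained by always splitting as evenly as possible (and dropping the ceiling around $g$). First I would show $w$ is convex, i.e.\ that $\Delta w(n):=w(n)-w(n-1)$ is non-decreasing. Subtracting the defining recursion at $n-1$ from that at $n$ and tracking how $\lfloor(n-1)/2\rfloor$ and $\lceil(n-1)/2\rceil$ move when $n\mapsto n-1$ yields the clean identity $\Delta w(n)=\Delta g(n)+\Delta w(\lfloor n/2\rfloor)$ for $n\ge 4$; since $\Delta g(n)=(2n+3)/8$ is increasing and $\lfloor n/2\rfloor$ is non-decreasing, convexity follows by induction from $\Delta w(2)=\Delta w(3)=0$.

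Next I would prove $z(n)\ge w(n)$ for all $n$ by strong induction. The cases $n\le 2$ are trivial, and for $n\ge 3$ the hypothesis gives $z(n)=\min_x\bigl(\lceil g(n)\rceil+z(x)+z(n-1-x)\bigr)\ge \lceil g(n)\rceil+\min_x\bigl(w(x)+w(n-1-x)\bigr)$ over $1\le x\le\lfloor(n-1)/2\rfloor$. By convexity of $w$, the quantity $w(x)+w(n-1-x)$ (with $x+(n-1-x)$ fixed) is smallest when the parts are as equal as possible, namely at $x=\lfloor(n-1)/2\rfloor$, which lies in range; hence $z(n)\ge g(n)+w(\lfloor(n-1)/2\rfloor)+w(\lceil(n-1)/2\rceil)=w(n)$. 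It then remains to show $w(n)\ge n^2/4+(n\ln n)/c-d$ for $(c,d)\in\{(10,80),(5,3122)\}$.

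For this, put $E(n):=w(n)-n^2/4$, so the recursion becomes $E(n)=(n-10)/4+2E((n-1)/2)$ for odd $n$ and $E(n)=(2n-19)/8+E(n/2-1)+E(n/2)$ for even $n$, with $E(1)=-1/4$, $E(2)=-1$. One proves $E(n)\ge(n\ln n)/c-d$ by strong induction. Feeding the hypothesis into the odd case and using $n\ln n-(n-1)\ln(n-1)\le 1+\ln n$ reduces the step to $d\le n(\tfrac14-\tfrac{\ln 2}{c})-\tfrac{1+\ln n}{c}-\tfrac{10}{4}+\tfrac{\ln 2}{c}$; since $10>5>4\ln 2$ the coefficient of $n$ is positive, so this holds for all $n$ past an explicit threshold $N_0(c,d)$ (a few hundred for $c=10$, on the order of $10^4$ for $c=5$), and the even case is similar but more lenient. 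For $n<N_0$ I would use that $E(n)\ge -M$ for an absolute constant $M$ — a crude consequence of the $E$-recursion plus a finite check, since each step incurs only bounded ``debt'' that the positive $(n-10)/4$ term quickly repays — so the target inequality is automatic once $(n\ln n)/c-d\le -M$; the remaining finite range is cleared from the explicit recursion (conveniently, convexity of $w$ together with the closed form $E(2^k-1)=(k-1)2^{k-2}-3\cdot 2^{k-1}+\tfrac{11}{4}$ yields a completely explicit piecewise-linear lower bound for $E$ that one simply compares to $(n\ln n)/c-d$).

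The main obstacle is precisely this last bookkeeping — fixing $N_0$ and clearing the residual range with room to spare — and it is what forces the ungainly constants: $80$ and $3122$ arise from optimizing $n\mapsto n\bigl(\tfrac74-(\tfrac1{4\ln 2}-\tfrac1c)\ln n\bigr)$, the largest amount by which $w$ falls below $n^2/4+(n\ln n)/c$ before its genuine $\Theta(n\log n)$ surplus (of coefficient $\tfrac1{4\ln 2}$) overtakes it; halving $c$ shrinks that surplus and so inflates the constant, which is exactly the trade-off between the two stated inequalities. The convexity of $w$, the bound $z\ge w$, and the asymptotics of the balanced recursion are all routine by comparison.
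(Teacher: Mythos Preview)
Your argument is correct and takes a genuinely different route from the paper. The paper proceeds by direct induction on $z$ with the target bound $n^2/4+(n\ln n)/c-d$: it plugs the inductive hypothesis into the recursion for $z$, observes that the resulting expression, viewed as a real function of the split point $x$, is convex and hence minimized at $x=(n-1)/2$, and then verifies the residual inequality in $n$ for $n$ beyond a threshold ($600$ for $c=10$, $62540$ for $c=5$), handling small $n$ by brute-force computation of $z$ and tabulation (Table~\ref{table:1}). You instead interpose the balanced-split function $w$: you prove $w$ is convex (via the nice identity $\Delta w(n)=\Delta g(n)+\Delta w(\lfloor n/2\rfloor)$), deduce $z\ge w$, and then analyze $E(n)=w(n)-n^2/4$ with its own recursion and the closed form at $n=2^k-1$. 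The advantage of the paper's method is brevity: one induction, one convexity check (of the explicit target function, not of $w$), and one computation. The advantage of yours is structural transparency: the convexity of $w$ and the closed form $E(2^k-1)=(k-1)2^{k-2}-3\cdot 2^{k-1}+\tfrac{11}{4}$ make it clear why the constants arise and give, via the piecewise-linear lower envelope from convexity, a route to the finite verification that does not reduce to pure tabulation. Both approaches ultimately require clearing the same finite range, and the specific constants $80$ and $3122$ are dictated by the same worst-case comparison you identify in your final paragraph.
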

\begin{proof}
	We prove the lemma by induction on $n$.
	We will prove the first bound. The second bound is proved similarly.
	As to the first bound, it is a simple computational task to verify that $z(n)$ satisfies the claimed inequality for all $n \le 600$. See Table \ref{table:1} for values of small $n$
	where $z(n)$ and $n^2/4+(n\ln n)/10 - 80$ are compared.
	Suppose now that $n > 600$ and that the lemma holds
	for all positive integers smaller than $n$.
	By the definition of $z(n)$, there exists an integer $1 \le x \le (n-1)/2$ such that
	$$
	z(n) \ge \frac{(n+7)(n-3)}{8}  + z(x)+ z(n-x-1)\;.
	$$
	So by the induction hypothesis,
	$$
	z(n) \ge \frac{(n+7)(n-3)}{8} + \frac{x^2}{4} + \frac{x}{10} \ln x - 80 + \frac{(n-x-1)^2}{4} + \frac{n-x-1}{10}\ln(n-x-1) - 80 \;.
	$$
	Viewing the right hand side as a real function of $x$ in $(0,(n-1)/2]$ it attains a minimum at
	$x=(n-1)/2$ and therefore,
	\begin{align*}
	z(n) & \ge \frac{(n+7)(n-3)}{8} + \frac{(n-1)^2}{8} + \frac{n-1}{10}\ln((n-1)/2)- 160\\
	& = \frac{n^2}{4}+ 0.15n-162.4 +\frac{n-1}{10}\ln(n-1)\\
	& \ge \frac{n^2}{4} + \frac{n\ln n}{10} - 80
	\end{align*}
	where we only need to show that the last inequality holds for all $n > 600$. Indeed, it amounts to show that for all
	$n > 600$ we have:
	$$
	1.5n \ge n\ln n - (n-1)\ln (n-1) + 824\;.
	$$
	But observe that for $n \ge 600$ we indeed have
	\begin{align}
	1.5 n & \ge 2+\ln(n-1)+824 \label{e:1}\\
	& \ge \ln \left[\left(1+\frac{1}{n-1}\right)^n \right] + \ln(n-1) + 824 \nonumber \\
	& = n\ln n - (n-1)\ln (n-1) + 824\;. \nonumber
	\end{align}
	It is clear from the proof that for any $k > 4$ we can obtain a bound of the form
	$z(n) \ge n^2/4+(n\ln n)/k - C$ for all $n \ge 1$ if we can find a suitable $C$ and a starting point
	$n_0$ for the inductive step (such as $C=80$ and $n_0=600$ in the case $k=10$ just proved).
	For example, in the case $k=5$ and the bound $z(n) \ge n^2/4+(n\ln n)/5 - C$, rewriting the proof
	amount to showing that there are $n_0$ and $C$ such that $z(n) \ge n^2/4+(n\ln n)/5 - C$ for
	all $n \le n_0$ and that for $n \ge n_0$ we have, analogous to (\ref{e:1}) that
	$$
	0.25n \ge  2+\ln(n-1) +5(C+2.5-0.2)\;.
	$$ 
	Indeed the last inequality holds for all $n \ge 62540$ and for $C=3122$
	and a simple computer verification shows that $z(n) \ge n^2/4+(n\ln n)/5 - 3122$ for all
	$n \le 62540$.
	The choice $C=3122$ is optimal in this case since $z(16383)=67129347$ and
	$\lceil (16383)^2/4+16383\ln (16383)/5 \rceil = 67132469$.
\end{proof}

\begin{table}
	\centering
	\begin{tabular}{|c|c|c|c|}
		\hline
		$n$ & $z(n)$ & $\lceil n^2/4+(n\ln n)/10 - 80 \rceil$ & gap\\
		\hline
		$ \le 17$ & $\ge 0$ & $< 0$ & $> 0$\\
		\hline
		$18$ & $ 74$ & $7$ & 67\\
		\hline
	$50$ & $618$ & $565$ &  $53$\\
	\hline
	$100$ & $2508$ & $2467$ &  $41$\\
	\hline
	$150$ & $5657$ & $5621$ &  $36$\\
	\hline
	$200$ & $10062$ & $10026$ &  $36$\\
	\hline
	$250$ & $15696$ & $15684$ &  $12$\\
	\hline
	$300$ & $22635$ & $22592$ &  $43$\\
	\hline
	$350$ & $30805$ & $30751$ &  $54$\\
	\hline
	$400$ & $40219$ & $40160$ &  $59$\\
	\hline
	$450$ & $50874$ & $50820$ &  $54$\\
	\hline
	$500$ & $62765$ & $62731$ &  $34$\\
	\hline
	$550$ & $75965$ & $75893$ &  $72$\\
	\hline
	$600$ & $90415$ & $90304$ &  $111$\\
	\hline
	\end{tabular}
	\caption{Some small values of $z(n)$ compared to $n^2/4+(n\ln n)/10 - 80$.}
	\label{table:1}
\end{table}

\begin{lemma}\label{l:algo-t}
There is an $O(n^2)$ time algorithm that finds a spanning shortcut tree with at least $\frac{1}{2}\binom{n}{2} +(n\ln n)/5 - 3122$ edges.
\end{lemma}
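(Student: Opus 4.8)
The plan is to turn the recursion behind Lemmas~\ref{l:hnzn} and~\ref{l:z-lower} into an algorithm. I would represent the input tournament $G$ by its adjacency matrix and run a recursive routine on induced sub-tournaments $H=G[S]$: if $|S|\le3$, return any Hamiltonian path of $H$ viewed as a degenerate binary tree; otherwise compute the out-degrees $d^+_H(v)$ of all $v\in S$, select a vertex $v\in S$ with $m_H(v)$ at least the value guaranteed by Lemma~\ref{l:mv} (one exists by that lemma), recurse on $G[N^+_H(v)]$ and $G[N^-_H(v)]$ to obtain binary trees $T_1,T_2$, and return the binary tree rooted at $v$ with right subtree $T_1$ and left subtree $T_2$. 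Since this is exactly the construction analysed in the proof of Lemma~\ref{l:hnzn}, the output is a spanning shortcut tree of $G$ with at least $z(n)$ shortcuts, which by Lemma~\ref{l:z-lower} is at least $n^2/4+(n\ln n)/5-3122\ge\tfrac12\binom n2+(n\ln n)/5-3122$; so correctness requires no new argument and only the running time has to be controlled.

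The first thing to arrange is that $v$ is found in $O(|S|^2)$ time, even though a direct evaluation of $m_H(v)$ for all $v$ would enumerate the $\Theta(|S|^3)$ transitive triangles of $H$. For this I would use a closed formula for $m_H(v)$ in terms of out-degrees only: $m_H(v)$ is the number of arcs of $H$ from $N^-_H(v)$ to $N^+_H(v)$, while the arcs of $H$ whose tail lies in $N^-_H(v)$ --- of which there are $\sum_{u\in N^-_H(v)}d^+_H(u)$ --- split into those with head inside $N^-_H(v)$ (exactly $\binom{d^-_H(v)}{2}$, since $H[N^-_H(v)]$ is itself a tournament), the $d^-_H(v)$ arcs that point to $v$, and the arcs into $N^+_H(v)$; hence
$$
m_H(v)=\sum_{u\in N^-_H(v)}d^+_H(u)-\binom{d^-_H(v)}{2}-d^-_H(v)\;.
$$
Once all $d^+_H(u)$ have been computed in $O(|S|^2)$ time, a single pass over the arcs of $H$ accumulates $\sum_{u\in N^-_H(v)}d^+_H(u)$ for every $v$ at once, so all the values $m_H(v)$, and in particular a vertex meeting the Lemma~\ref{l:mv} bound, are obtained in $O(|S|^2)$ time; the remaining work of one recursive call is $O(|S|^2)$ as well.

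The more delicate point, which I expect to be the main obstacle, is getting the time recursion to close at $O(n^2)$ rather than at $O(n^2\log n)$ or worse; here the key is that the chosen $v$ is \emph{automatically} well-balanced. Indeed $m_H(v)\le d^+_H(v)\,d^-_H(v)$, and by Lemma~\ref{l:mv} we have $m_H(v)\ge(|S|-1)(|S|-3)/8$ (which holds also when $|S|$ is even, a fortiori from the bound $\lceil(|S|-2)^2/8\rceil$), so $d^+_H(v)\,d^-_H(v)\ge(|S|-1)(|S|-3)/8$; combined with $d^+_H(v)+d^-_H(v)=|S|-1$ this forces $d^+_H(v)$ and $d^-_H(v)$ to be at most $\big(\tfrac12+\tfrac1{\sqrt8}\big)|S|<\tfrac78|S|$. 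Thus a call on a set of size $s\ge4$ spawns two calls on disjoint sets whose sizes sum to $s-1$ and are each at most $\tfrac78 s$. The recursion tree is binary and its internal nodes select pairwise-distinct vertices of $G$, so it has at most $n$ internal nodes and at most $n+1$ leaves, the latter costing $O(1)$ each. Grouping the internal calls by recursion depth, those at depth $k$ lie on pairwise disjoint vertex sets each of size at most $(7/8)^k n$ (each of their ancestors has size at least $4$, so the size drops by a factor at least $8/7$ per step), hence the squares of their sizes sum to at most $(7/8)^k n^2$; summing the geometric series over $k$ gives $O(n^2)$. Thus the total running time is $O(n^2)$, the only ingredients beyond Lemmas~\ref{l:hnzn}, \ref{l:mv} and~\ref{l:z-lower} being the closed formula above and this automatic-balance accounting.
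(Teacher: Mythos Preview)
Your proposal is correct and follows essentially the same approach as the paper: reduce to Lemmas~\ref{l:hnzn} and~\ref{l:z-lower} for correctness, compute $m(v)$ via the closed formula in terms of out-degrees (your expression $\sum_{u\in N^-(v)}d^+(u)-\binom{d^-(v)}{2}-d^-(v)$ is the paper's $\sum_{u\in N^-(v)}(d^+(u)-1)-\binom{d^-(v)}{2}$ rewritten), and use that a vertex meeting the Lemma~\ref{l:mv} bound has both sides of size $\Theta(|S|)$. Your recursion-depth accounting with the explicit constant $7/8$ is more detailed than the paper's one-line assertion that each sub-tournament has $\Theta(n)$ vertices, but the underlying argument is the same; one small wording quibble is that for $|S|=3$ the returned tree should root the middle vertex of the Hamiltonian path (a truly degenerate chain need not be a shortcut tree for $C_3$), but this is cosmetic.
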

\begin{proof}
	By Lemmas \ref{l:hnzn} and \ref{l:z-lower}, we only need to show how to locate a vertex $v$ in a tournament for which $m(v)$ is maximum in $O(n^2)$ time, as we can then recursively solve the problem on the sub-tournaments $G[N^+(v)]$ and $G[N^-(v)]$ which each have $\Theta(n)$ vertices.
	(Since when $m(v)$ is maximum, it is quadratic in $n$ so the in-degree and out-degree of $v$
	are $\Theta(n)$ each.) Indeed we can compute $m(v)$ for each vertex $v$ in $O(n)$ time
	given all the in-degrees and out-degrees of all other vertices. The number of $T_3$ in which $v$
	is sink is $\binom{d^-(v)}{2}$. The number of $T_3$ in which $v$ is not a source is
	$\sum_{u \in N^-(v)} (d^+(u)-1)$. So
	$$
	m(v) = \left(\sum_{u \in N^-(v)} (d^+(u)-1)\right) - \binom{d^-(v)}{2}\;.
	$$
\end{proof}

To prove the upper bound of Theorem \ref{t:3} we consider random tournaments.
We first recall them and establish some properties that hold in them with positive probability.
A random tournament is the uniform probability space $G(n)$ of all tournaments labeled on vertex set $[n]$.
So $G \sim G(n)$ is generated by choosing for each pair $\{i,j\}$ the direction of the edge connecting them by a fair coin flip, and all $\binom{n}{2}$ choices are independent.

We shall require the following definition. For a tournament $G$ with vertex set $[n]$, for a given
vector $K \in [n]^k$ of $k$ distinct vertices and for a vector $D \in \{+,-\}^k$, the {\em agreement set} $A(K,D)$ is
$$
A(K,D) = \{v \,|\, v \in [n] \setminus K \textrm{ such that } v \in N^{D(i)}(K(i)) \textrm{ for } i=1,\ldots,k\}\;.
$$
Namely, each vertex $v$ of $A(K,D)$ has the property that if for $x=K(i)$ the corresponding $D(i)$ is $+$ then
$xv \in E(G)$ and if the corresponding $D(i)$ is $-$ then $vx \in E(G)$.
For convenience, define also $A(\emptyset,\emptyset)=[n]$.
\begin{lemma}\label{l:random-tour}
	Let $G \sim G(n)$. For $n$ sufficiently large, with positive probability the following hold:
	\begin{enumerate}
		\item
		$G$ has no transitive sub-tournament on more than $3\log n$ vertices.
		\item
		For all $k \ge 0$, for all vectors $K \in [n]^k$ of $k$ distinct vertices such that $G[K]$ is transitive and for all vectors $D \in \{+,-\}^k$ the following holds.
		If $|A(K,D)| = r$ then for any vertex $v \in A(K,D)$, the number of $T_3$ in $G[A(K,D)]$ in which $v$ is the middle vertex is at most $a(r-1-a)/2+r\log n$ where $a$ is the in-degree of $v$ in $G[A(K,D)]$.
	\end{enumerate}
\end{lemma}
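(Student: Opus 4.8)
The plan is to prove both parts by a union bound over the relevant configurations, using standard concentration inequalities (Chernoff bounds) for sums of independent Bernoulli random variables.

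\medskip

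\noindent
\textbf{Part 1 (no large transitive sub-tournament).} First I would recall the classical first-moment argument. The expected number of transitive sub-tournaments on $m$ vertices of $G \sim G(n)$ is $\binom{n}{m} m! \, 2^{-\binom{m}{2}}$, since a fixed $m$-set is transitive precisely when its edges realize one of the $m!$ linear orders, each of probability $2^{-\binom{m}{2}}$. For $m = \lceil 3\log n \rceil$ this quantity is at most $n^m 2^{-\binom{m}{2}} = 2^{m\log n - \binom{m}{2}}$, and since $\binom{m}{2} \ge m(m-1)/2 \ge 3m\log n / 2 - m/2$, the exponent is $m\log n - m(m-1)/2 < -m\log n/2 \to -\infty$. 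So this probability tends to $0$, and in particular is less than, say, $1/2$ for $n$ large.

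\medskip

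\noindent
\textbf{Part 2 (the middle-vertex bound).} For a fixed transitive vector $K \in [n]^k$ and sign vector $D \in \{+,-\}^k$, the set $A(K,D)$ is determined by the edges between $K$ and $[n]\setminus K$. Conditioned on $A(K,D) = S$ with $|S| = r$, the sub-tournament $G[S]$ is still a uniformly random tournament on $S$, because its edges are independent of everything used to define $S$. Now fix $v \in S$. Condition additionally on the in-neighborhood of $v$ inside $S$ being a particular set of size $a$; write $S \setminus \{v\} = B \cup C$ with $|B| = a$ (in-neighbors of $v$) and $|C| = r-1-a$ (out-neighbors). The number of $T_3$'s in $G[S]$ with $v$ as middle vertex is exactly the number of edges directed from $B$ to $C$, which is a sum of $a(r-1-a)$ independent Bernoulli$(1/2)$ variables, hence has mean $a(r-1-a)/2$. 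By the Chernoff bound, the probability this exceeds its mean by more than $r\log n$ is at most $\exp(-2(r\log n)^2 / (a(r-1-a))) \le \exp(-2(r\log n)^2/(r^2/4)) = \exp(-8\log^2 n) = n^{-8\log n}$, using $a(r-1-a) \le (r-1)^2/4 \le r^2/4$.

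\medskip

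\noindent
\textbf{Union bound and conclusion.} I would then union-bound over all choices. The number of pairs $(K,D)$ with $|K| = k$ is at most $n^k 2^k$; since we only care about transitive $K$, and in any case $k \le 3\log n$ can be assumed once Part 1 holds (a transitive $K$ of larger size cannot occur in the good event — though it is cleaner to just bound $k \le n$ crudely, the bound below still works), the total number of $(K,D,v,a)$ tuples is at most $\sum_{k} n^k 2^k \cdot n \cdot n = n^{O(\log n)} \cdot 2^{O(\log n)} = n^{O(\log n)}$. Multiplying by the per-tuple failure probability $n^{-8\log n}$ gives a total of $n^{-\Omega(\log n)} \to 0$, in particular less than $1/2$ for $n$ large. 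Hence both events hold simultaneously with probability at least $1 - 1/2 - o(1) > 0$.

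\medskip

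\noindent
\textbf{Main obstacle.} The delicate point is the book-keeping in the union bound: one must be careful that the number of relevant configurations $(K,D)$ is only $n^{O(\log n)}$ rather than $2^{\Theta(n)}$, so restricting attention to $k \le 3\log n$ (justified by Part 1, or by noting that for larger $k$ the set $A(K,D)$ is typically empty anyway) is what makes the quantifier ``for all $k \ge 0$'' affordable. A clean way to handle this is to first expose the edges within any candidate transitive set and discard instances violating Part 1, then argue Part 2 only needs $k = O(\log n)$; alternatively, condition on $|A(K,D)| = r$ and observe that for the claimed inequality to be nontrivial we need $r$ not too small, while for $k$ large $r$ concentrates near $n 2^{-k}$, so $k$ up to roughly $\log n$ suffices to cover all $r \ge 1$. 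The concentration step itself is routine once the conditioning is set up so that the $a(r-1-a)$ edges from $B$ to $C$ are genuinely independent fair coin flips — which they are, since they are disjoint from the edges defining $A(K,D)$, from the edges incident to $v$, and from the edges within $K$.
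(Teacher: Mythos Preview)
Your proposal is correct and follows essentially the same route as the paper: a first-moment bound for Part~1, Chernoff applied to the $a(r-1-a)$ independent fair coins (after conditioning on $A(K,D)$ and on the in-neighbourhood of $v$) for Part~2, and a union bound over the $n^{O(\log n)}$ pairs $(K,D)$ with $k\le 3\log n$, justified by Part~1. Two small slips worth cleaning up: your parenthetical that one could ``just bound $k\le n$ crudely'' is false (that would give $2^{\Theta(n)}$ configurations, killing the union bound), and the equality $\exp(-8\log^2 n)=n^{-8\log n}$ is off by a constant in the exponent since the paper's $\log$ is base~$2$; neither affects the argument.
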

\begin{proof}
	It is well-known that the maximum transitive sub-tournament of $G \sim G(n)$ almost surely has
	at most $2\log n(1+o(1))$ vertices \cite{EM-1964}. In particular, with probability at least $0.9$
	it does not have a transitive sub-tournament  with more than $3\log n$ vertices.
	Notice that this also implies that the {\em total} number of transitive sub-tournaments (of any size) 
	in $G$ is at most $n^{3\log n}$.
	
	Next, suppose that $R \subseteq [n]$ is a given set of $r$ vertices,
	and consider $G[R]$ which is a random tournament on $r$ vertices.
	For $v \in R$ let $m(v)$ denote the number of $T_3$ in $G[R]$ in which $v$ is the middle vertex.
	Suppose we are {\em given} the information of the set $A \subset R$ of in-neighbors of $v$
	in $G[R]$ and let $a=|A|$. We compute the probability that $m(v)$ is larger than $a(r-1-a)/2+r \log n$ given that information. Notice that given $A$ we also know $B$, the set of out-neighbors of $v$ in $G[R]$.
	For each $u \in A$ and $w \in B$ let $X_{u,w}$ be the indicator random variable for the event that the
	triple $u,v,w$ induces a $T_3$ in $G[R]$ in which $v$ is the middle vertex. Then
	$m(v)$ is just the sum of these indicator variables and distributed $Bin(a(r-1-a),\frac{1}{2})$.
	The probability that $m(v)$ is larger than its expected value $a(r-1-a)/2$ by more than
	$r \log n$ is at most
	$$
	\exp\left(-\frac{2(r\log n)^2}{a(r-1-a)}\right) < \exp\left(-8(\log n)^2\right)< \frac{1}{n^{5\log n}}
	$$
	where we have used the Chernoff's large deviation inequality (see \cite{AS-2004})
	and that $a(r-1-a) < r^2/4$.
	As this upper bound $n^{-5\log n}$ for the probability holds regardless of the
	given set $A$ of the in-neighbors of $v$ in $G[R]$, it follows that
	\begin{equation}\label{e:mv}
		\Pr\left[m(v) \ge a(r-1-a)/2+ r\log n\right] < \frac{1}{n^{5\log n}}\;.
	\end{equation}
	
	Now, suppose we are told that $R=A(K,D)$ for given vectors $K$ and $D$. Observe that given this information, $G[R]$ is still completely random as the $\binom{r}{2}$ coin flips determining $G[R]$ are
	independent of this information. Hence, for given $K,D$ such that $|A(K,D)| = r$,
	(\ref{e:mv}) holds.
	
	Now how many pairs $(K,D)$ are there such that $G[K]$ is transitive?
	Recall that with probability at least $0.9$, there are only at most $n^{3\log n}$ transitive sub-tournaments
	and that they are all of size at most $3\log n$. So, in this case there are only at most $2^{3 \log n}n^{3\log n} < n^{4 \log n}$ such pairs.
	Hence we have by the union bound and by (\ref{e:mv}) that {\em all} the corresponding $A(K,D)$
	and all their vertices $v$ satisfy $m(v) \le a(r-1-a)/2+ r\log n$ where $r=|A(K,D)|$, $a$ is the in-degree of $v$ in $G[A(K,D)]$,
	and this occurs with probability at least
	$$
	0.9 - n^{4 \log n} \cdot n \cdot  \frac{1}{n^{5\log n}} > 0\;.
	$$
\end{proof}

We now fix a tournament $G$ on vertex set $[n]$ for which the two properties stated in Lemma \ref{l:random-tour} hold.
\begin{lemma}\label{l:upper}
	Any spanning shortcut tree of $G$ has at most $\frac{1}{2}\binom{n}{2} +4n\log^{2} n$ shortcut edges.
\end{lemma}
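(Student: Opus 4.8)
The plan is to run a recursion over the given spanning shortcut tree. Fix a spanning shortcut tree $T$ of $G$. For a node $u$ of $T$ let $T_u$ be the subtree rooted at $u$, let $R_u\subseteq[n]$ be its vertex set with $r_u=|R_u|$, let $a_u=|N^-(u)\cap R_u|$ be the size of the left subtree of $u$ and $b_u=r_u-1-a_u$ the size of the right subtree, and let $S(T_u)$ denote the number of shortcuts of the in-order Hamiltonian path of $T_u$. The first step is the structural observation that $R_u$ is an agreement set in the sense of Lemma \ref{l:random-tour}. Indeed, since the left (resp.\ right) subtree of any node of a spanning shortcut tree is \emph{all} of that node's in-neighborhood (resp.\ out-neighborhood) inside its own subtree, a short induction on depth shows that if $x_0,\dots,x_{d-1}$ are the proper ancestors of $u$ and $D(i)\in\{+,-\}$ records whether $u$ is a right- or left-descendant of $x_i$, then $R_u=A\big((x_0,\dots,x_{d-1}),(D(0),\dots,D(d-1))\big)$. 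Moreover the ancestor set $\{x_0,\dots,x_{d-1}\}$ is transitive: any two ancestors are comparable along a root-to-leaf path, and that comparison agrees with the edge direction between them, so the in-order restricted to the ancestors witnesses transitivity. Hence by Property 1 of Lemma \ref{l:random-tour} every node of $T$ has at most $3\log n$ ancestors, i.e.\ depth at most $3\log n$.

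The second step is a recursion for $S(T_u)$. The in-order path of $T_u$ is the concatenation of the in-order path of the left subtree, then $u$, then the in-order path of the right subtree, so a case analysis of the new vertex pairs created by this gluing gives
\[
S(T_u)=S(T_\ell)+S(T_r)+\max(a_u-1,0)+\max(b_u-1,0)+m_u,
\]
where $\ell,r$ are the children of $u$ (with $S(\cdot)=0$ on an empty subtree) and $m_u$ is the number of edges directed from the left subtree to the right subtree: every left-vertex points to $u$ and $u$ points to every right-vertex (so these contribute $\max(a_u-1,0)$ and $\max(b_u-1,0)$ shortcuts, one endpoint in each group being excluded because it is a path edge), while a left-right pair is a shortcut exactly when the edge is forward. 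Crucially, $m_u$ equals the number of $T_3$ of $G[R_u]$ having $u$ as middle vertex, since such a $T_3$ is precisely a triple (in-neighbor of $u$, $u$, out-neighbor of $u$) with the outer edge oriented from the in-neighbor to the out-neighbor. Applying Property 2 of Lemma \ref{l:random-tour} with the ancestor vector as $K$ and with $v=u$ (legitimate by the first step) yields $m_u\le a_ub_u/2+r_u\log n$.

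The third step is to telescope. Using the identity $\binom{r_u}{2}=\binom{a_u}{2}+\binom{b_u}{2}+a_ub_u+a_u+b_u$ and setting $f(u):=S(T_u)-\tfrac12\binom{r_u}{2}$, the displayed recursion together with the bound on $m_u$ collapses (the $\binom{a_u}{2},\binom{b_u}{2}$ and $a_ub_u/2$ terms cancel) to
\[
f(u)\le f(\ell)+f(r)+\Big(\max(a_u-1,0)+\max(b_u-1,0)-\tfrac{a_u+b_u}{2}\Big)+r_u\log n\le f(\ell)+f(r)+\tfrac{r_u}{2}+r_u\log n,
\]
since the bracketed quantity is at most $\tfrac{a_u+b_u}{2}\le\tfrac{r_u}{2}$. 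Unrolling this over all nodes of $T$ (with $f=0$ on empty and leaf subtrees) gives $f(\mathrm{root})\le(\tfrac12+\log n)\sum_u r_u$, and since $\sum_u r_u=\sum_u(\mathrm{depth}(u)+1)\le 3n\log n$ by the depth bound from the first step, we conclude $S(T)-\tfrac12\binom{n}{2}=f(\mathrm{root})\le(\tfrac12+\log n)\cdot 3n\log n\le 4n\log^2 n$ for all sufficiently large $n$. The main obstacle is the first step — verifying carefully that the subtrees of a spanning shortcut tree are agreement sets over transitive ancestor vectors so that Lemma \ref{l:random-tour}(2) genuinely applies, and handling the degenerate one-child/leaf cases — after which the rest is a mechanical telescoping computation.
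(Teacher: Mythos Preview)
Your proof is correct and follows essentially the same approach as the paper. Both arguments rest on the two key observations that (i) the vertex set of each subtree $T_u$ is an agreement set $A(K,D)$ over the transitive ancestor vector $K$, giving the depth bound $\le 3\log n$ via Property~1, and (ii) the number of ``cross'' shortcuts at $u$ equals $m_u$, the number of $T_3$'s in $G[R_u]$ with $u$ as middle vertex, which Property~2 bounds by $a_ub_u/2+r_u\log n$. The only difference is organizational: the paper partitions the non-ancestor edges by the level of their lowest common ancestor and sums $|E_\ell|/2+n\log n$ over levels, while you write a node-by-node recursion and telescope via $f(u)=S(T_u)-\tfrac12\binom{r_u}{2}$; both arrive at a bound of the form $(\text{const}+\log n)\sum_u r_u$ with $\sum_u r_u\le O(n\log n)$. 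One tiny slip: from ``depth $\le 3\log n$'' you get $\sum_u(\text{depth}(u)+1)\le n(3\log n+1)$, not $3n\log n$, but this is absorbed in the final $4n\log^2 n$ (and in fact the ancestor chain together with $u$ is still transitive, so depth $\le 3\log n-1$ and your bound holds as written).
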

\begin{proof}
	Fix some spanning shortcut tree $B$ of $G$, rooted at some vertex $x$. We say that
	a vertex $u$ is at {\em level $\ell$} of $B$ if its tree distance from $x$ is $\ell$.
	Let $L_\ell$ denote all the vertices at level $\ell$ (so $L_0 = \{x\}$).
	For any vertex $v$, let $B_v$ denote the sub-tree of $B$ rooted at $v$.
	
	First we claim that the level of each vertex is at most $3\log n$.
	Indeed, for any vertex $v$ at level $\ell$, its set of ancestors up to the root $x$
	is a set of $\ell$ vertices which induces a $T_\ell$ in $G$, so by the first property of $G$, we have that
	$\ell \le 3\log n$. Let the maximum level be denoted by $k$, so $k \le 3\log n$ and
	$\cup_{\ell=0}^k L_\ell = [n]$.
	
	Let $F$ be the set of edges of $G$ that connect a vertex with its ancestor. As the maximum level is $k$
	we have that $|F| \le k n \le 3n \log  n$.
	For an edge $uw$ of $G$, we say that it is {\em separated} at level $\ell$ by $v$ if the lowest common ancestor of $u$ and $w$ is $v$ where $v \neq u,w$ and $v$ is in level $\ell$.
	For example, in Figure \ref{f:shortcut-tree}, the edge of $G$ connecting $d$ and $e$ is separated at level $1$ by  $b$ (since $b$ is their lowest common ancestor and $b$ is in level $1$) and the  edge of $G$ connecting $b$ and $f$ is separated at level $0$ by $a$.
	Observe that the notion of separation is well-defined for all edges in $E(G) \setminus F$.
	Let $E_\ell$ be the set of edges that are separated at level $\ell$. So we have
	$\cup_{\ell=0}^{k-1} E_\ell = E(G) \setminus F$.
		
	Next we estimate, for each level $\ell$, the number of shortcut edges in $E_\ell$.
	Let therefore $v \in L_\ell$. Let $K$ be the set of ancestors of $v$ up to the root.
	So $K$ induces a transitive sub-tournament on $\ell$ vertices and
	the vertex set of $B_v$ is just the agreement set $A(K,D)$ for some vector $D \in \{+,-\}^{\ell}$ (notice that if $v=x$ is the root then $K=\emptyset$ and $B_x=[n]$ in this case).
	
	Observe that the number of edges separated by $v$ that are shortcut edges is precisely $m(v)$,
	where as before $m(v)$ denotes the number of $T_3$ in $G[A(K,D)]$ in which $v$ is the middle vertex.
	Now, by the second property of $G$ we have that
	$m(v) \le a(r-1-a)/2+ r \log n$ where $r=|A(K,D)|=|B_v|$ and $a$ is the in-degree of $v$ in $G[A(K,D)]$.
	Observe that $a(r-1-a)$ is also the number of edges separated by $v$.
	So,  summing over all vertices $v \in L_\ell$,  the overall number of shortcut edges in $E_\ell$ is
	at most
	$$
	|E_\ell|/2 + n \log n\;.
	$$
	Summing this over all levels and adding also the edges of $F$ (most of which are shortcuts) we obtain that
	the number of shortcut edges of $B$ is at most
	$$
	|F| + \sum_{\ell=0}^{k-1} \left(\frac{|E_\ell|}{2} + n \log n\right) \le 3n \log n + \frac{|E(G)|}{2} + 3n \log^2 n \le \frac{1}{2}\binom{n}{2}+4n \log^2 n\;.
	$$
\end{proof}

Finally, lemmas \ref{l:hnzn}, \ref{l:z-lower}, \ref{l:algo-t}, and \ref{l:upper} together prove Theorem \ref{t:3}. \qed

\section{Hop-complete paths}

In this section we prove Theorem \ref{t:2}.
Let $P_s^{(2)}$ denote the second power of a directed path on $s$ vertices, or equivalently, a hop-complete path of order $s$.
We prove that every tournament with $n$
vertices contains a $P_s^{(2)}$ with $s \ge n^{0.295}$.

For two distinct vertices $u,v$ of a tournament, let $N^{+}(u,v)$ denote the set of common out-neighbors of $u,v$
and let $N^{-}(u,v)$ denote the set of common in-neighbors of $u,v$. Put $d^{+}(u,v) = |N^{+}(u,v)|$
and $d^{-}(u,v) = |N^{-}(u,v)|$. For a tournament $G$, let
$$
\Delta_2(G)=\max_{u,v \in V(G)} \min \{d^{+}(u,v), d^{-}(u,v)\}\;.
$$
It is not difficult to construct tournaments where already the maximum of $\min\{d^{+}(u), d^{-}(u)\}$ is
less than $n/4$. The following lemma proves that if we consider common neighborhoods of pairs of vertices, we can still guarantee that $\Delta_2(G)$ is a (not so small) fraction of $n$.
\begin{lemma}\label{l:Delta}
	Let $G$ be a tournament with $n$ vertices, then $\Delta_2(G) \ge n\left(\frac{3-\sqrt{5}}{8}\right) - 4$.
\end{lemma}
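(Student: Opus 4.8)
The plan is to prove the bound by induction on $n$; the base cases are vacuous because $\tfrac{3-\sqrt 5}{8}n-4\le 0$ for all small $n$. For the inductive step I would fix a vertex $u$ with $d^+(u)\ge (n-1)/2$ and set $A=N^+(u)$, $B=N^-(u)$, $a=|A|\ge (n-1)/2$, $b=|B|=n-1-a$. Three elementary facts drive everything. First, for the pair $\{u,v\}$ with $v\in A$ one has $d^+(u,v)=d^+_{G[A]}(v)$ and $d^-(u,v)=|N^-(v)\cap B|$, with the mirror statement for $v\in B$. Second, for any two vertices $v_1,v_2\in A$ the vertex $u$ is a common in-neighbour, so $d^-(v_1,v_2)\ge 1+d^-_{G[A]}(v_1,v_2)$, while every vertex of $B$ beaten by both $v_i$ is a common out-neighbour, giving $d^+(v_1,v_2)\ge d^+_{G[A]}(v_1,v_2)+b-|N^-(v_1)\cap B|-|N^-(v_2)\cap B|$ (and symmetrically inside $B$, with $u$ now a common out-neighbour). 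Third, an averaging bound: in any tournament on $m$ vertices there is a pair with at least $(m-3)/4$ common in-neighbours, since $\sum_{\{x,y\}}|N^-(x)\cap N^-(y)|=\sum_w\binom{d^+(w)}{2}\ge m\binom{(m-1)/2}{2}$ by convexity, and likewise for common out-neighbours.

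I would then branch on the orientation of the edges between $A$ and $B$. If $u$ is almost a source --- $b$ below a small absolute constant --- then the induction hypothesis applied to $G[A]$, together with $u$ as an extra common in-neighbour of a near-optimal pair of $G[A]$, already suffices, since the loss $\tfrac{3-\sqrt5}{8}(1+b)$ is absorbed into the $-4$. Otherwise, fix a threshold $\theta$ and let $A^{\ast}=\{v\in A:\ |N^-(v)\cap B|\le\theta\}$ be the set of ``$B$-dominators'' in $A$; every vertex outside $A^{\ast}$ contributes more than $\theta$ to $e(B,A)$, so $|A^{\ast}|\ge a-e(B,A)/\theta$, and any two vertices of $A^{\ast}$ share more than $b-2\theta$ common out-neighbours inside $B$. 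A good pair inside $A^{\ast}$ then has common out-neighbourhood boosted by $b-2\theta$ and common in-neighbourhood boosted by $u$ together with the averaging (or inductive) bound applied to $G[A^{\ast}]$. When $e(B,A)$ is large, so that $A^{\ast}$ is too small to use, one has $e(A,B)$ small and runs the mirror argument inside the analogous dominating subset of $B$; when the bipartite part between $A$ and $B$ is balanced, the cleanest sub-case is a single pair $\{u,v\}$ with $v$ of simultaneously large out-degree in $A$ and large in-degree from $B$.

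The sizes that emerge --- roughly $a-e(B,A)/\theta$ for the dominating set, $b-2\theta$ for the boost from the third part, and $\tfrac14|A^{\ast}|$ (or $\tfrac{3-\sqrt5}{8}|A^{\ast}|$) from the interior bound --- then have to be balanced against one another and against $n$. Choosing $\theta$ proportional to $n$ and equating the two sides of the resulting minimum produces a quadratic relation whose relevant root satisfies $16x^2-12x+1=0$, equivalently $4x=\tfrac{3-\sqrt5}{2}$, which is exactly how the constant $\tfrac{3-\sqrt5}{8}$ would arise here.

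I expect this balancing to be the main obstacle. A positive constant of this shape comes out without much trouble, but extracting the optimal $\tfrac{3-\sqrt5}{8}$ forces one to keep track of $e(A,B)$ quantitatively, to use the averaging bound rather than the cruder ``pick a high-degree vertex'' bound inside the subtournaments, and to check that the cross-pairs between $A$ and $B$ (which contribute about $\min\{d^+_{G[B]}(v_2),\ d^-_{G[A]}(v_1)\}$) never open up a regime worse than the ones above. Getting the additive error down to exactly $-4$ is then only a matter of care with floors and with the finite verification that seeds the induction.
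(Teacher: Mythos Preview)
Your sketch has the right flavor and you correctly identify the quadratic $16x^2-12x+1=0$ whose smaller root is $(3-\sqrt5)/8$, but the argument as written has a genuine gap: your choice of the pivot vertex is too weak to force that quadratic.

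The paper does \emph{not} pick an arbitrary vertex with $d^+(u)\ge (n-1)/2$. Instead it uses Lemma~\ref{l:mv} to choose $v$ maximizing $m(v)$, the number of copies of $T_3$ with $v$ as middle vertex; this immediately yields the lower bound
\[
e\bigl(N^-(v),\,N^+(v)\bigr)\ \ge\ \frac{(n-1)(n-3)}{8},
\]
i.e.\ at least an eighth of all non-incident pairs go from $B$ to $A$. This is precisely the quantitative input that makes the threshold argument work. The paper then sets $X=\{w\in A:\,|N^-(w)\cap B|>\beta(n-1)\}$ (the \emph{complement} of your $A^\ast$), uses the edge bound to force $|X|$ large, picks $u\in X$ of maximum out-degree inside $G[X]$, and exhibits the single pair $(u,v)$: then $d^-(u,v)\ge\beta(n-1)$ and $d^+(u,v)\ge(|X|-1)/2$. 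Optimizing $\beta$ against $|X|$ is what produces $16\beta^2-12\beta+1=0$ at $\gamma=\tfrac12$. No induction, no case split on $e(A,B)$ versus $e(B,A)$, no pair inside $A$.

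Your setup does not recover this, because with a bare ``$d^+(u)\ge(n-1)/2$'' hypothesis you have no lower bound on $e(B,A)$ at all. Your $A^\ast/B^\ast$ branches need $e(B,A)$ (respectively $e(A,B)$) \emph{small}, but when $a\approx b\approx n/2$ and $e(A,B)\approx e(B,A)\approx n^2/8$ neither branch applies: to keep $b-2\theta\ge cn$ you need $\theta<n/4$, while to keep $|A^\ast|\ge a-e(B,A)/\theta>0$ you need $\theta>n/4$. You then fall back on ``a single pair $\{u,v\}$ with $v$ of simultaneously large out-degree in $A$ and large in-degree from $B$,'' but you give no reason such a $v$ exists; averaging only controls the sum $d^+_{G[A]}(v)+|N^-(v)\cap B|$, not the minimum, and it is easy to arrange (split $A$ into a top half beating $B$ and a bottom half beaten by $B$) that every vertex is bad on one count or the other. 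The paper sidesteps this entirely by \emph{choosing} the pivot to make $e(B,A)$ large a priori, so that its threshold set $X$ is automatically big.

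In short: the missing idea is to select the root via $m(v)$ (Lemma~\ref{l:mv}) rather than via out-degree. Once you do that, the induction, the $A^\ast/B^\ast$ dichotomy, and the cross-pair worries all evaporate, and the quadratic you wrote down falls out of the direct optimization.
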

\begin{proof}
	The lemma trivially holds if $n \le 40$ so assume that $n > 40$.
	By Lemma \ref{l:mv}, $G$ has a vertex $v$ with $m(v) \ge (n-1)(n-3)/8$, namely $v$ is the middle vertex of at least $(n-1)(n-3)/8$ copies of $T_3$ in $G$. Let $A=N^+(v)$ and $B=N^-(v)$. Since every $T_3$
	in which $v$ is a middle vertex contains a source vertex from $B$ and a sink vertex from $A$, there are
	at least $(n-1)(n-3)/8$ edges directed from $B$ to $A$, that is, $|E(B,A)| \ge (n-1)(n-3)/8$.
	In particular, each of $A$ and $B$ is of size linear in $n$. Assume without loss of generality that
	$|A| \ge |B|$ and let $|A|=\gamma(n-1)$ and $|B|=(1-\gamma)(n-1)$ where $\frac{1}{2} \le \gamma < 1$.
	In fact, we can first show that $\gamma \le 0.863$. Indeed,
	$$
	(n-1)^2\gamma(1-\gamma) = |A||B| \ge |E(B,A)| \ge \frac{(n-1)(n-3)}{8}
	$$
	implies that
	$$
	\gamma^2-\gamma + \frac{1}{8}-\frac{1}{4(n-1)} \le 0
	$$
	so
	$$
	\gamma \le \frac{1}{2}+\sqrt{\frac{1}{8}+\frac{1}{4(n-1)}} < 0.863
	$$
	where we have used that $n > 40$.
	Let $\beta$ be a parameter to be set later where $0 \le \beta < 1-\gamma$.
	Let $X \subseteq A$ be the set of vertices that have more than $\beta(n-1)$ in-neighbors in
	$B$ and let $\alpha=|X|/(n-1)$. Since $|E(B,A)| \ge (n-1)(n-3)/8$ we have that
	$$
	\alpha(n-1)(1-\gamma)(n-1) + (\gamma - \alpha)(n-1)\beta(n-1) \ge \frac{(n-1)(n-3)}{8}\;.
	$$
	Equivalently,
	$$
	\alpha(1-\gamma) + (\gamma - \alpha)\beta \ge \frac{1}{8} - \frac{1}{4(n-1)}
	$$
	implying that
	$$
	\alpha \ge \frac{1-8\beta\gamma}{8(1-\gamma-\beta)} - \frac{1}{4(n-1)(1-\gamma-\beta)}\;.
	$$
	Since every tournament with $x$ vertices has a vertex with out-degree at least $(x-1)/2$,
	there is a vertex $u \in X$ such that the out-degree of $u$ in $G[X]$ is at least
	$(\alpha(n-1)-1)/2$. So, there are at least $(\alpha(n-1)-1)/2$ vertices in $N^+(u,v)$ and
	at least $\beta(n-1)$ vertices in $N^-(u,v)$.
	It follows that
	\begin{equation}\label{e:min}
	\Delta_2(G) \ge (n-1) \cdot \min \left\{ \beta\,,\, \frac{1-8\beta\gamma}{16(1-\gamma-\beta)}\right\} -   
	\frac{1}{8(1-\gamma-\beta)}-\frac{1}{2}\;.
	\end{equation}
	Consider first the case $0.8 \le \gamma \le 0.863$. In this case, we will use $\beta=0.1$ (this choice satisfies $\beta < 1-\gamma$).
	Observe that $0.1 \le (1-0.8\gamma)/(16(0.9-\gamma))$  for $\gamma$ in this range, so we obtain
	$$
	\Delta_2(G) \ge 0.1(n-1)-\frac{1}{8(0.9-\gamma)}-\frac{1}{2} \ge n\left(\frac{3-\sqrt{5}}{8}\right) - 4\;.
	$$
	Consider next the case $\frac{1}{2} \le \gamma \le 0.8$. For $\gamma$ in this range, the function $f(\beta)=(1-8\beta\gamma)/(16(1-\gamma-\beta))$ is a monotone decreasing function of $\beta$. Thus, the optimal choice of $\beta$ maximizing the minimum term in (\ref{e:min}) is obtained when $\beta=f(\beta)$ which, in turn, occurs for
	$$
	\beta= \frac{1}{2}-\frac{\gamma}{4}-\frac{\sqrt{3+\gamma^2-4\gamma}}{4}\;.
	$$
	So, choosing this value of $\beta$ and plugging it in (\ref{e:min}) we obtain that
	$$
	\Delta_2(G) \ge (n-1)\left(\frac{1}{2}-\frac{\gamma}{4}-\frac{\sqrt{3+\gamma^2-4\gamma}}{4}\right)
	-\frac{1}{8(1/2-3\gamma/4+\sqrt{3+\gamma^2-4\gamma}/4)}-\frac{1}{2}\;.
	$$
	The term multiplying $(n-1)$ in the last expression is minimized when $\gamma=\frac{1}{2}$ in which case it equals
	$\frac{3-\sqrt{5}}{8}$. The expression $8(1/2-3\gamma/4+\sqrt{3+\gamma^2-4\gamma}/4)$ is minimized at $\gamma=0.8$
	in which case it is still larger than $0.52$. Hence,
	$$
	\Delta_2(G) \ge (n-1)\left(\frac{3-\sqrt{5}}{8}\right)-\frac{1}{0.52}-\frac{1}{2} \ge n\left(\frac{3-\sqrt{5}}{8}\right) - 4\;.
	$$
\end{proof}
We note that it is possible to slightly improve the constant $\frac{3-\sqrt{5}}{8}$ in Lemma \ref{l:Delta} at the expense of a more involved proof since the lower bound for $m(v)$ in Lemma \ref{l:mv}
is only obtained for tournaments that are almost regular, and we did not assume this in the proof of Lemma \ref{l:Delta}. However, the improvement is rather small, so we omit the details.

Using Lemma \ref{l:Delta}, we can now prove Theorem \ref{t:2} by induction on $n$.
For all $n \le 1000$ the theorem clearly holds since $n^{0.295}$ is smaller than the cardinality
of the largest transitive tournament. Indeed, this is straightforward to verify since
every tournament with $2^{r-1}$ vertices contains $T_r$ \cite{stearns-1959} (so, say, for $512 \le n \le 1000$ we have
$n^{0.295} <  8$ while the tournament contains a $T_9$ so also a $P_9^{(2)}$).
Assume that $n > 1000$ and that the lemma holds for values smaller than $n$.
Let $G$ be a tournament with $n$ vertices.
Let $u,v$ be two distinct vertices of $G$ with
$d^{+}(u,v) \ge n\left(\frac{3-\sqrt{5}}{8}\right) - 4$ and  $d^{-}(u,v) \ge n\left(\frac{3-\sqrt{5}}{8}\right) - 4$. By Lemma \ref{l:Delta}, such a pair of vertices exists.

Let $G_1=G[N^{-}(u,v)]$ and $G_2=G[N^{+}(u,v)]$.
By the induction hypothesis, $G_1$ has a $P_{s_1}^{(2)}$ with $s_1 \ge (d^{-}(u,v))^{0.295}$
and $G_2$ has a $P_{s_2}^{(2)}$ with $s_2 \ge (d^{+}(u,v))^{0.295}$.
Assume without loss of generality that $uv \in E(G)$.
Construct a path by concatenating the $P_{s_1}^{(2)}$ of $G_1$ followed by $u$, followed by $v$,
followed by the $P_{s_2}^{(2)}$ of $G_2$. Then this concatenated path has $s_1+s_2+2$ vertices and is a
$P_{s_1+s_2+2}^{(2)}$. Its order satisfies
$$
s_1+s_2+2 \ge  2\left(n\left(\frac{3-\sqrt{5}}{8}\right) - 4\right)^{0.295}+2 \ge n^{0.295}
$$
where in the last inequality we have used $n \ge 1000$ and that
$$
\frac{\ln 2}{\ln(8/(3-\sqrt{5}))} > 0.295\;.
$$
\qed

We end this section by showing that there are tournaments
of order $n$ where the longest $k$th power of a path has order at most $nk/2^{k/2}$.
\begin{proposition}\label{p:1}
	For every $k \ge 2$ there are infinitely many integers $n$ for which there is a tournament with
	$n$ vertices where the longest $k$'th power of a path has at most $nk/2^{k/2}$ vertices.
\end{proposition}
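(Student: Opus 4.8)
The plan is to mimic, for general $k$, the structure of the tournament $R_n$ from Section~2: glue together many disjoint copies of one small ``bad'' tournament, orienting all edges between copies in the same direction. First observe that for $2\le k\le 4$ we have $2^{k/2}\le k$, so $nk/2^{k/2}\ge n$ and the statement is vacuous; hence assume $k\ge 5$, so that $2^{k/2}>k$. Set $b=\lceil 2^{k/2}\rceil$. The two ingredients are: (i) a tournament $H$ on $b$ vertices with no transitive sub-tournament on $k+1$ vertices; and (ii) a cascade of $t$ copies of $H$.

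For ingredient (i) I would take a random tournament $H\sim G(b)$ and bound the expected number of copies of $T_{k+1}$ in it by a first-moment argument. An ordered $(k+1)$-tuple of distinct vertices induces a $T_{k+1}$ in that order with probability $2^{-\binom{k+1}{2}}$, and distinct tuples give distinct copies, so the expectation is $\big(\prod_{i=0}^{k}(b-i)\big)2^{-\binom{k+1}{2}}$. Pairing the factor $b-i$ with $b-(k-i)$ and using $(b-i)(b-(k-i))=(b-k/2)^2-(i-k/2)^2\le (b-k/2)^2$ gives $\prod_{i=0}^k (b-i)<(b-k/2)^{k+1}$ (all factors are positive as $b>k$ for $k\ge 5$); since $b-k/2\le \lceil 2^{k/2}\rceil-k/2<2^{k/2}$, this is below $(2^{k/2})^{k+1}=2^{\binom{k+1}{2}}$, so the expectation is less than $1$ and such an $H$ exists. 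The point of this property is that the longest $k$-th power of a directed path inside $H$ has at most $k$ vertices: a $k$-th power of a directed path on $k+1$ vertices $v_1,\dots,v_{k+1}$ contains the edge $v_iv_j$ for every $i<j$ (all gaps are at most $k$), i.e.\ it is a copy of $T_{k+1}$, which $H$ avoids.

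For ingredient (ii), fix any $t\ge 1$, put $n=tb$, and let $G$ be the tournament on $V_1\cup\dots\cup V_t$ with $|V_i|=b$, $G[V_i]\cong H$, and every edge between $V_i$ and $V_j$ ($i<j$) oriented from $V_i$ to $V_j$. I claim every $k$-th power of a directed path $v_1v_2\cdots v_s$ in $G$ satisfies $s\le tk$. Writing $\beta(p)$ for the index of the block containing $v_p$: since each consecutive edge $v_pv_{p+1}$ is present and every inter-block edge runs from the lower to the higher index, the sequence $\beta(1),\dots,\beta(s)$ is non-decreasing, so for each $a$ the set of positions in block $a$ is a (possibly empty) interval $[l,r]$; on that interval all vertices lie in $V_a$ and they still form a $k$-th power of a directed path, now inside $G[V_a]\cong H$, hence $r-l+1\le k$ by ingredient (i). Summing over $a=1,\dots,t$ yields $s\le tk\le tbk/2^{k/2}=nk/2^{k/2}$, the last inequality because $b\ge 2^{k/2}$; as $t$ ranges over the positive integers this produces infinitely many admissible $n$. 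The step needing the most care is this confinement claim — the monotonicity of the block indices along the path, which forces it to split into at most $t$ runs, each trapped inside a single copy of $H$ and therefore of length at most $k$; the first-moment estimate and the closing arithmetic are routine.
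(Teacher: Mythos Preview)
Your proof is correct and follows essentially the same approach as the paper: blow up a small $T_{k+1}$-free tournament into a cascade of copies with all inter-copy edges pointing forward, then use monotonicity of block indices along any directed path to confine each run to at most $k$ vertices. The only cosmetic differences are that the paper cites the Erd\H{o}s--Moser bound $g(k)\ge 2^{k/2}$ for the existence of the small building block, whereas you supply the first-moment argument explicitly, and the paper uses $g(k)$ blocks (which covers all $k\ge 2$ uniformly) while you dispose of $k\le 4$ as vacuous before setting $b=\lceil 2^{k/2}\rceil$.
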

\begin{proof}
	Let $g(k)$ denote the largest integer such that there is a tournament
on $g(k)$ vertices which does not contain $T_{k+1}$. So, for example $g(2)=3$, $g(3)=7$,
$g(4)=13$ \cite{RP-1970} and Erd\H{o}s and Moser \cite{EM-1964} proved that
$g(k) \ge 2^{k/2}$. Let $F_k$ denote some tournament on $g(k)$ vertices with no $T_{k+1}$.	
Suppose that $n$ is a multiple of $g(k)$ and define the tournament
$R(n,k)$ as follows. Take $n/g(k)$ pairwise vertex-disjoint copies of $F_k$
denoted by $X_1,\ldots,X_{n/g(k)}$.
Now, if $i < j$, orient all $g(k)^2$ edges connecting $X_i$ and $X_j$ from $X_i$ to $X_j$.
Observe that $R(n,2)$ is just the tournament $R_n$ from Section 2.
Notice that in every directed path of $R(n,k)$ if a vertex $v \in X_i$ is on the path and a vertex
$u \in X_j$ is somewhere after $v$ on the path then $i \le j$.
So all the vertices of the path that belong to some $X_i$ must be consecutive on the path.
But since $R(n,k)[X_i]$ is isomorphic to $F_k$ and since $F_k$ does not contain $T_{k+1}$,
any $k$'th power of a path in $R(n,k)$ contains at most $k$ vertices of $X_i$.
It follows that the longest $k$th power of a path in $R(n,k)$ is of order at most
$nk/g(k) \le nk/2^{k/2}$.
\end{proof}

\section*{Acknowledgment}

I thank the referee for very useful comments and suggestions.

\bibliographystyle{plain}

\bibliography{references}

\end{document}